\newtheorem{theorem}{Theorem}[section]
\newtheorem{proposition}[theorem]{Proposition}
\newtheorem{lemma}[theorem]{Lemma}
\newtheorem{corollary}[theorem]{Corollary}
\theoremstyle{definition}
\newtheorem{definition}[theorem]{Definition}
\newtheorem{remark}[theorem]{Remark}
\newtheorem{example}[theorem]{Example}
\renewcommand{\P}{\mathbb{P}}
\newcommand{\calMbar}{\overline{\mathcal{M}}}
\newcommand{\Mbar}{\overline{\mathcal{M}}}
\newcommand{\M}{\mathcal{M}}
\newcommand{\calH}{\mathcal{H}}
\renewcommand{\H}{\mathcal{H}}
\newcommand{\calHbar}{\overline{\mathcal{H}}}
\newcommand{\Hbar}{\overline{\mathcal{H}}}
\newcommand{\Aut}{\operatorname{Aut}}
\newcommand{\Adm}{\operatorname{Adm}}
\newcommand{\Admbar}{\overline{\operatorname{Adm}}}
\newcommand{\del}{{\partial}}
\newcommand\reallywidehat[1]{%
\savestack{\tmpbox}{\stretchto{%
  \scaleto{%
    \scalerel*[\widthof{\ensuremath{#1}}]{\kern.1pt\mathchar"0362\kern.1pt}%
    {\rule{0ex}{\textheight}}
  }{\textheight}%
}{2.4ex}}%
\stackon[-6.9pt]{#1}{\tmpbox}%
}
\tikzset{
  trim node/.default=1cm,
  trim node/.style={
    overlay,
    append after command={
      ([xshift={+#1}]\tikzlastnode.north west)
      ([xshift={+-#1}]\tikzlastnode.south east)}},
  down and trim/.default=1cm,
  down and trim/.style={
    yshift=-(\pgfmatrixcurrentcolumn-1)*1.5\baselineskip,
    trim node={#1}},
  downup and trim/.default=1cm,
  downup and trim/.style={
    yshift=iseven(\pgfmatrixcurrentcolumn) ? -1.5\baselineskip : 0pt,
    trim node={#1}},
  -|/.style={to path={-|(\tikztotarget)\tikztonodes}},
  |-/.style={to path={|-(\tikztotarget)\tikztonodes}},
  -| sl/.style={-|, xslant=-1},
  |- sl/.style={|-, xslant= 1},
  center picture/.style={
    trim left=(current bounding box.center),
    trim right=(current bounding box.center)}}
\begin{document}

\title[Vanishing $H^1$ for $\Hbar_{\underline{3},\underline{g}}(\underline{\mu})$]{
 Vanishing $H^1$ for Hurwitz spaces of fully-marked admissible covers of degree 3
  }
  \author[A.Q.~Li]{Amy Q. Li}
  \address{Department of Mathematics, University of Texas at Austin, United States of America}
  \email{amyqli@utexas.edu}

  \subjclass[2020]{14H10, 14H30, 14C17}

\begin{abstract}
  We show that the first cohomology group of the Hurwitz space of fully-marked admissible covers $H^1(\Hbar_{\underline{d},\underline{g}}(\underline{\mu}))$ vanishes for covers of degree $ d = 3$ and deduce the same result for the classical Hurwitz space of simply-branched covers.
  In degree 4, we compute examples where $H^1(\Hbar_{\underline{4},\underline{g}}(\underline{\mu}))$ is nonzero, which implies that $H^1(\Hbar_{\underline{d},\underline{g}}(\underline{\mu}))$ is nonvanishing for $d \geq 4$.
  We describe the stratification of the boundary of $\Hbar_{\underline{d},\underline{g}}(\underline{\mu})$ by lower-dimensional $\Hbar_{\underline{d'},\underline{g'}}(\underline{\mu'})$, and set up an inductive framework which may be used for future arguments involving the odd cohomology of $\Hbar_{\underline{d},\underline{g}}(\underline{\mu})$.
\end{abstract}
\maketitle

\section{Introduction}
The Hurwitz space $\Adm^s_{0,b,d}$ parametrizes simply-branched degree-$d$ covers from smooth connected genus-$g$ curves to smooth genus-$0$ curves, marked at $b = 2g + 2d - 2$ branch points.
Harris and Mumford introduced the ``admissible covers'' compactification of $\Adm^s_{0,b,d}$, in which the target and source curves of a cover degenerate to nodal curves as branch points come together and the target curve is stable \cite{HM}.
Although this space is singular, its normalization is the smooth moduli space of twisted stable maps \cite{ACV}, which we denote by $\Admbar^s_{0,b,d}$.
We refer to the normalization whenever we mention the admissible covers compactification.
The Picard Rank Conjecture \cite{DE96} predicts that the rational Picard group vanishes in all degrees $d$ for both $\Adm_{0,b,d}^s$ and its variant $\H^s_{d,g} = \Adm_{0,b,d}^s/S_b$.
Recent work on the geometry of $\H^s_{d,g}$ confirms this conjecture for $d \leq 5$ \cite{DPPic} and for $d > g-1$ \cite{MullanePRC}.
Furthermore, the homology of $\H^s_{d,g}$ stabilizes, implying that the Picard Rank Conjecture holds asymptotically when $b \gg d$ \cite{LLhomstab}.

The Hurwitz space $\Hbar_{2,g}^s$ of degree-2 admissible covers is isomorphic to the quotient $\Mbar_{0,b}/S_b$.
It follows that $H^1(\Hbar_{2,g}^s) = H^1(\Mbar_{0,b})^{S_b} = 0$.
For admissible covers of degree 3, we develop tools for an inductive argument inspired by \cite{ACcoh} to show that $H^1$ vanishes for the Hurwitz space of fully-marked admissible covers $ \Hbar_{\underline{d}, \underline{g}}(\underline{\mu})$, a generalization of $\Admbar^s_{0,b,d}$.
This Hurwitz space parametrizes possibly disconnected admissible covers with fully-marked fibers of prescribed (not necessarily simple) ramification  (see ~\ref{fullmkadm}).

\begin{theorem}
  \label{main}
  The first cohomology group $H^1(\Hbar_{\underline{3}, \underline{g}}(\underline{\mu})) = 0$ for every partition of $3$, every partition of $g$, and any set of $m$ ramification profiles $\mu_i$ partitioning $3$.
\end{theorem}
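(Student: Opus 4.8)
The plan is to induct on $N := \dim \Hbar_{\underline{3},\underline{g}}(\underline{\mu}) = b-3$, where $b$ is the number of branch points, using the boundary stratification together with a Hodge-weight argument in the spirit of \cite{ACcoh}. Write $\H_{\underline{3},\underline{g}}(\underline{\mu}) \subset \Hbar_{\underline{3},\underline{g}}(\underline{\mu})$ for the open locus of covers of a smooth irreducible $\P^1$ and $\del\Hbar_{\underline{3},\underline{g}}(\underline{\mu})$ for its complementary boundary. First I would record that the open part is affine: it is finite over $M_{0,b}$ via the map remembering only the marked branch points, and $M_{0,b}$ is a smooth affine variety. Hence for $N \geq 2$ Artin vanishing gives $H^1_c(\H_{\underline{3},\underline{g}}(\underline{\mu});\Q) = 0$, so the excision sequence
$$H^1_c(\H_{\underline{3},\underline{g}}(\underline{\mu})) \to H^1(\Hbar_{\underline{3},\underline{g}}(\underline{\mu})) \to H^1(\del\Hbar_{\underline{3},\underline{g}}(\underline{\mu})) \to H^2_c(\H_{\underline{3},\underline{g}}(\underline{\mu}))$$
forces the restriction $H^1(\Hbar_{\underline{3},\underline{g}}(\underline{\mu})) \to H^1(\del\Hbar_{\underline{3},\underline{g}}(\underline{\mu}))$ to be injective.

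Next I would feed in the stratification described in the abstract: each boundary divisor, and more generally the normalization of each closed boundary stratum, is a finite quotient of a product of strictly lower-dimensional fully-marked degree-3 Hurwitz spaces $\Hbar_{\underline{3},\underline{g}'}(\underline{\mu}')$. By the inductive hypothesis every such factor has vanishing first cohomology; by K\"unneth and passage to finite-group invariants the normalization $\widetilde{\del\Hbar}$ of the boundary then satisfies $H^1(\widetilde{\del\Hbar};\Q) = 0$, so $\mathrm{gr}^W_1 H^1(\del\Hbar_{\underline{3},\underline{g}}(\underline{\mu});\Q) = 0$. Since $\Hbar_{\underline{3},\underline{g}}(\underline{\mu})$ is a smooth and proper Deligne--Mumford stack, $H^1(\Hbar_{\underline{3},\underline{g}}(\underline{\mu});\Q)$ is a pure Hodge structure of weight $1$. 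The restriction map above is a morphism of mixed Hodge structures, hence strict for the weight filtration, so its image --- a pure weight-$1$ structure --- must inject into $\mathrm{gr}^W_1 H^1(\del\Hbar_{\underline{3},\underline{g}}(\underline{\mu})) = 0$. Combined with injectivity this yields $H^1(\Hbar_{\underline{3},\underline{g}}(\underline{\mu})) = 0$, closing the induction.

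It remains to treat the base cases $N = 0$ and $N = 1$, where the excision argument does not apply. For $N = 0$ (so $b = 3$) the space is a finite set of reduced points and $H^1$ vanishes trivially. For $N = 1$ (so $b = 4$) the open part is an affine curve with $H^1_c \neq 0$, so I would instead compute $H^1$ directly: $\Hbar_{\underline{3},\underline{g}}(\underline{\mu})$ is then a smooth proper curve finite over $\Mbar_{0,4} \cong \P^1$, and the decisive degree-3 input is that each such Hurwitz curve is rational, whence $H^1 = 0$. This is precisely the step that breaks in degree $4$, where the analogous one-dimensional Hurwitz spaces can have positive genus, producing the nonvanishing examples announced in the introduction.

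I expect the main obstacle to be geometric rather than homological. The induction closes only because every boundary stratum is again a product of fully-marked degree-3 Hurwitz spaces of the same flavour, and establishing this requires a careful analysis of how a degree-3 admissible cover degenerates: how its monodromy and the ramification profiles $\underline{\mu}'$ redistribute across the components of a nodal genus-$0$ target, and how automorphisms and node-gluing conspire to present each stratum as a finite quotient of such a product. The second genuine point of work is the $N = 1$ base case, a Riemann--Hurwitz genus computation on the Hurwitz curve in which the smallness of the possible monodromy groups inside $S_3$ is what forces genus $0$; this is the technical heart on which the degree-3 hypothesis is really used.
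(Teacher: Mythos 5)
Your proposal follows essentially the same route as the paper: injectivity of $H^1(\Hbar_{\underline{3},\underline{g}}(\underline{\mu})) \to H^1(\del\Hbar_{\underline{3},\underline{g}}(\underline{\mu}))$ via affineness of the open locus, a purity/weight-strictness argument to pass to the normalization of the boundary (the paper's Injectivity Lemma), induction over the boundary stratification by finite quotients of products of lower-dimensional fully-marked Hurwitz spaces (the paper's inductive vanishing proposition, specialized to $k=1$), and genus-0 verification of the 1-dimensional base cases over $\Mbar_{0,4}$ via Riemann--Hurwitz. The only real difference is that the paper actually executes those base-case computations, by enumerating fully-marked monodromy representations in $S_3$ and the monodromy of the target map (recorded in its degree-2 and degree-3 tables), whereas you correctly identify this as the technical heart but leave it asserted.
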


\noindent We give examples of 1-dimensional Hurwitz spaces in degree 4 and degree 5 of fully-marked admissible covers with positive genus (see \Cref{degree4}).
It follows that $H^1(\Hbar_{\underline{d}, \underline{g}}(\underline{\mu}))$ does not vanish for $d \geq 4$.

The spaces $\Admbar^s_{0,b,d}$ and $\Hbar_{d,g}^s$ are finite group quotients of $\Hbar_{d,g}((2,1,\dots,1)^b)$, where the source curve is connected and all marked fibers have simple ramification (see \Cref{relnotherHurw}).
We immediately obtain the following corollary.

\begin{corollary}
  The first cohomology groups $H^1(\Admbar^s_{0,b,3}) = H^1(\Hbar^s_{3,g}) = 0$.
\end{corollary}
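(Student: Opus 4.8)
The plan is to obtain the corollary as a direct consequence of Theorem~\ref{main} and the quotient description recorded in \Cref{relnotherHurw}. The essential observation is that the fully-marked space $\Hbar_{3,g}((2,1,\dots,1)^b)$ --- with connected genus-$g$ source and a single transposition over each of the $b$ branch points --- is precisely the case of $\Hbar_{\underline{3},\underline{g}}(\underline{\mu})$ in which $\underline{3}=(3)$, $\underline{g}=(g)$, and each $\mu_i=(2,1)$ is the simple-ramification partition of $3$. Theorem~\ref{main} therefore applies verbatim and yields $H^1(\Hbar_{3,g}((2,1,\dots,1)^b);\Q)=0$.

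First I would invoke the standard principle that rational cohomology is insensitive to finite quotients: if a finite group $G$ acts on a Deligne--Mumford stack $X$ (equivalently, on its coarse moduli space), then pullback along $X\to X/G$ induces an isomorphism $H^i(X/G;\Q)\xrightarrow{\sim} H^i(X;\Q)^G$ onto the $G$-invariants, the inverse being $|G|^{-1}$ times the transfer. By \Cref{relnotherHurw} both $\Admbar^s_{0,b,3}$ and $\Hbar^s_{3,g}$ are quotients of $\Hbar_{3,g}((2,1,\dots,1)^b)$ by finite groups $G_1$ and $G_2$; applying the principle in degree $i=1$ identifies $H^1(\Admbar^s_{0,b,3};\Q)$ and $H^1(\Hbar^s_{3,g};\Q)$ with the $G_1$- and $G_2$-invariant subspaces of $H^1(\Hbar_{3,g}((2,1,\dots,1)^b);\Q)=0$. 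Hence both vanish.

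There is no serious obstacle here: once \Cref{relnotherHurw} supplies the finite-quotient presentations and Theorem~\ref{main} supplies the vanishing upstairs, the conclusion is immediate. The only point requiring care is the coefficient ring --- the transfer argument uses that $|G_1|$ and $|G_2|$ are invertible, so the statement should be read with $\Q$-coefficients (or over any field in which these orders are units); over $\Z$ the invariants principle can fail and the argument does not apply.
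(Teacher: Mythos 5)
Your proposal is correct and follows essentially the same route as the paper: the paper deduces the corollary immediately from \Cref{main} together with the finite-quotient presentations of $\Admbar^s_{0,b,3}$ and $\Hbar^s_{3,g}$ recorded in \Cref{relnotherHurw}, with the transfer/invariants argument for finite group quotients (with $\Q$-coefficients) left implicit. Your write-up simply makes that implicit step explicit, including the correct caveat about invertibility of the group orders in the coefficient ring.
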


Our proof of \Cref{main} is inspired by the inductive argument introduced by Arbarello and Cornalba to study the low degree odd cohomology of moduli spaces of stable curves $\Mbar_{g,n}$ \cite{ACcoh}.
Arbarello and Cornalba used the stratification of the boundary of $\Mbar_{g,n}$ by the images of gluing maps from products of lower-dimensional moduli spaces of curves to show that $H^k(\Mbar_{g,n})$ vanishes for $k = 1, 3, $ and $5$.
They conjectured that inductive arguments could be used to show vanishing for higher odd $k$, once the requisite base cases were proven.
Indeed, Bergstr\"{o}m, Faber, and Payne showed that $H^k (\Mbar_{g,n})$ vanishes for $k = 7$ and $9$ \cite{BFPOdd}, thereby proving the conjecture for all odd $k < 11$.
This bound is optimal, since the cohomology group $H^{11}(\Mbar_{1,n})$ is nonzero once $n \geq 11$.

Adapting these strategies to Hurwitz spaces requires new tools.
First, we must find a Hurwitz space with the correct boundary structure.
Like $\Mbar_{g,n}$, the boundaries of the simply-branched Hurwitz spaces $\Admbar^s_{0,b,d}$ and $\Hbar_{d,g}^s$ are stratified by the images of gluing maps from products of lower-dimensional Hurwitz spaces.
The Hurwitz spaces appearing in the domains of these gluing maps, however, parametrize admissible covers with arbitrary (not necessarily simple) ramification, since a simply-branched admissible cover in the boundary may have arbitrary ramification over the nodes.
Thus, even if we are primarily interested in the low degree odd cohomology of $\Admbar^s_{0,b,d}$ and $\Hbar_{d,g}^s$, in order to argue inductively we are inevitably led to study the low degree odd cohomology of the more general Hurwitz spaces $\Hbar_{\underline{d},\underline{g}}(\underline{\mu})$.

In \Cref{indodd}, we show that when $k$ is odd, the vanishing of the $k$-th cohomology group of low-dimensional $\Hbar_{\underline{d'}, \underline{g'}}(\underline{\mu'})$ implies the vanishing of the $k$-th cohomology group for all $\Hbar_{\underline{d}, \underline{g}}(\underline{\mu})$.
Thus, for $k = 1$ and $d = 3$, we must compute $H^1(\Hbar_{\underline{d}, \underline{g}}(\mu_1, \mu_2, \mu_3, \mu_4))$ for every possible choice of ramification profile $\mu_i$.
To solve this technical challenge, we take advantage of the natural target map from these 1-dimensional Hurwitz spaces to $\Mbar_{0,4}$, which is a covering map.
For each base case, it suffices to compute the monodromy of this target map, thereby computing that each Hurwitz space is genus-0.
\Cref{main} follows.

In principle, \Cref{indodd} could be used to prove vanishing for higher odd $k$, analogously to the argument in \cite{ACcoh}; see \Cref{futurework}.

\subsection{Outline of the paper}
In Section 2, we define Hurwitz spaces of fully-marked admissible covers and describe their relationships to other Hurwitz spaces in the literature.
In Section 3, we recast the geometric data of an admissible cover as the combinatorial data of monodromy representations, allowing us to combinatorially describe the target map $\Hbar_{\underline{d}, \underline{g}}(\underline{\mu}) \to \Mbar_{0,m}$.
In Section 4, we show that the boundary divisors of $\Hbar_{\underline{d}, \underline{g}}(\underline{\mu})$ are finite quotients of products of lower-dimensional Hurwitz spaces, setting the stage for the inductive proof of vanishing $H^1$ in Section 5.
Finally, in Section 6, we record the data of the base cases necessary to prove that $H^1$ vanishes for admissible covers in degrees 3.
We also provide examples of 1-dimensional Hurwitz spaces of fully-marked admissible covers in degrees 4 and 5 which have positive genus.

\subsection*{Acknowledgements}
The author would like to thank Chiara Damiolini, Darragh Glynn, Rok Gregoric, Rohini Ramadas, Kenny Schefers, and Sara Torelli for helpful conversations related to this project.
Special thanks to Hannah Larson for valuable discussions about computing the 1-dimensional base cases, as well as helpful comments on an earlier version of this paper.
Finally, the author would like to thank their advisor, Sam Payne, for guiding them to an interesting problem and for many productive conversations.
The author was supported in part by NSF DMS–2542134.

\section{Fully-marked admissible covers}

Here, we define the Hurwitz spaces in this paper and describe their relationships to other Hurwitz spaces in the literature.

\begin{definition}[Fully-marked admissible covers]\label{fullmkadm}
  Let $ \underline{d} = (d_1, \dots, d_k)$ be a partition of an integer $d > 0$.
  Let $\underline{g} = (g_1, \dots, g_k)$ be a tuple of nonnegative integers.
  Let $\underline{\mu}= (\mu_1, \dots, \mu_m)$ be a tuple of $m$ partitions of $d$ where each $\mu_i = (\mu_i^1, \dots, \mu_i^{n_i})$.
  A fully-marked admissible cover of type $(\underline{d}, \underline{g}, \underline{\mu})$ is a finite morphism $f\colon C \to D$ satisfying the following:
  \begin{itemize}
    \item The target curve $D$ is an $m$-marked stable curve of genus 0.
    \item The source curve $C$ is a disjoint union of stable curves $C_\ell$ of genus $g_\ell$, marked at each point in the inverse image of the marked locus of $D$.
    The map $C_\ell \to D$ has degree $d_\ell$.
    \item The fibers over the $m$ marked points of $D$ satisfy the ramification profiles given by the $\mu_i$.
    Precisely, if $b_i$ is a marked point on $D$ with ramification profile $\mu_i$, then $f^{-1}(b_i) = \sum_{j = 1}^{n_i} \mu_i^j p_i^j$.
    Consequently, points in $f^{-1}(b_i)$ with the same ramification profile are distinguished by the marking.
    \item The nodes of $C$ are the inverse image of the nodes of $D$.
    At a node, the cover is locally given by $x = z^r $ on both branches of the node for some $r\leq d_\ell$ (known as the kissing condition).
    \item The map $f$ is \'etale outside of the marked locus and nodes of $D$.
  \end{itemize}
\end{definition}

We define families of fully-marked admissible covers as follows.
\begin{definition}
  A family of fully-marked admissible covers of type $(\underline{d}, \underline{g}, \underline{\mu})$ over a scheme $S$ is a commutative diagram

    \begin{center}
      \begin{tikzcd}[column sep=2em, row sep=3em]
      C \arrow[rd] \arrow[rr, "f"] &                                                                 & D \arrow[ld] \\
                                   & S \arrow[lu, "p_i^j", bend left] \arrow[ru, "b_i"', bend right] &
      \end{tikzcd}
    \end{center}
    \noindent where:

    \begin{itemize}
      \item The target curve $D$ is a genus-$0$ curve over $S$ with sections $b_i$.
      \item The source curve $C$ is a disjoint union of stable genus-$g_\ell$ curves $C_\ell$ over $S$, with sections $p_i^j$ satisfying $f^{-1}(b_i) = \sum_{j=1}^{n_i} \mu_i^j p_i^j$.
      The map $C_\ell \to D$ has degree $d_\ell$.
      \item $f$ is \'etale outside of the image of the $b_i$ and the nodes of $D$.
    \end{itemize}
\end{definition}

\begin{definition}[Morphisms of fully-marked admissible covers]\label{covermorphisms}
  A morphism of families of fully-marked admissible covers of type $(\underline{d}, \underline{g}, \underline{\mu})$ from $(f\colon C \to D)$ to $(f'\colon C' \to D')$ over $S$
 is a pair of morphisms $(\phi_{\text{src}}\colon C \to C', \phi_{\text{tgt}}\colon D \to D')$ such that the diagram below commutes:
  \begin{center}
      \begin{tikzcd}[column sep=2em, row sep=5em]
      C \arrow[rr, "f"] \arrow[d, "\phi_{\text{src}}"] &                                                                                                                                           & D \arrow[d, "\phi_{\text{tgt}}"'] \\
      C' \arrow[rd] \arrow[rr, "f'"]                    &                                                                                                                                           & D' \arrow[ld]                      \\
                                                        & S. \arrow[lu, "{p'}_i^j" description, bend left] \arrow[ru, "b'_i"' description, bend right] \arrow[luu, "p_i^j", bend left=60] \arrow[ruu, "b_i"', bend right=60] &
      \end{tikzcd}
  \end{center}
\end{definition}

\begin{definition}[Hurwitz space of fully-marked admissible covers]\label{HurwitzSpace}
  Let $ \underline{d} = (d_1, \dots, d_k)$ be a partition of an integer $d > 0$.
  Let $\underline{g} = (g_1, \dots, g_k)$ be a tuple of nonnegative integers.
  Let $\underline{\mu} = (\mu_1, \dots, \mu_m)$ be a tuple of $m$ partitions of $d$ where each $\mu_i = (\mu_i^1, \dots, \mu_i^{n_i})$.

  The Hurwitz space $\Hbar_{\underline{d},\underline{g}} (\underline{\mu})$ is the moduli stack of isomorphism classes of fully-marked admissible covers of type $(\underline{d}, \underline{g}, \underline{\mu})$.
  There is an open dense substack $\H_{\underline{d},\underline{g}} (\underline{\mu}) \subseteq \Hbar_{\underline{d},\underline{g}} (\underline{\mu})$ parametrizing such admissible covers where $C$ and $D$ are smooth.
  When $\underline{d} = (d)$ and $\underline{g} = (g)$, corresponding to connected covers, we simply write $\Hbar_{d,g}(\underline{\mu})$.
\end{definition}

\begin{remark}
  The Hurwitz space defined in \Cref{HurwitzSpace} is a generalization of the admissible covers compactification of $\H_{d,g}^s$. The geometric picture to have in mind is that when marked fibers approach each other, they ``bubble off''  onto another component (simultaneously on the source and target curves).
  A helpful discussion of this appears at the end of Chapter 3, Section 3 in \cite{harris2006moduli}.
\end{remark}

\subsection{Relation to other Hurwitz spaces}\label{relnotherHurw}
  The Hurwitz space of fully-marked admissible covers $\Hbar_{\underline{d},\underline{g}}(\underline{\mu})$ is the same as the Hurwitz space treated in \cite{CMRTrop}, except that here we fix the target curve to be genus-0.

  In \cite{ACV}, Abramovich, Corti, and Vistoli introduced the space $\overline{\operatorname{Adm}}_{0,m,\underline{d}}$ parametrizing arbitrarily branched covers from possibly disconnected source curves with $m$ marked points on the target genus-0 curve, and showed that it is a smooth stack.
  Here, we use $\underline{d}$ (slightly differing from the notation of \cite{ACV}) to denote the degree of the cover from the different components of the source curve.
  The space $\overline{\operatorname{Adm}}_{0,m,\underline{d}}$ has a covering map from $\Hbar_{\underline{d},\underline{g}}(\underline{\mu})$ given by quotienting by $\prod_{i=1}^m S(\mu_i)$ where $S(\mu_i)$ is the product of symmetric groups corresponding to repeated ramification indices in $\mu_i$.
  For example, the ramification profile $(2,2, 1, 1,1)$ corresponds to the product $S_2 \times S_3$.
  Quotienting by $\prod_{i=1}^m S(\mu_i)$ corresponds to forgetting the ordering in each marked fiber on the source curve.
  In particular, this covering map is \'etale, since its fibers have constant cardinality.
  Therefore $\Hbar_{\underline{d},\underline{g}}(\underline{\mu})$ is also smooth (as a stack).

  We denote the Hurwitz space of connected, fully-marked, simply-branched covers by
   \[\Hbar^s_{d,g}(b,m) := \Hbar_{d,g}((2,1,\dots,1)^b, (1,1,\dots,1)^{m-b}),\]
  \noindent where $b = 2g + 2d - 2$. When $m = b$ , only the branched fibers are marked.
  The Harris-Mumford Hurwitz space introduced in \cite{HM} is the quotient $\Hbar^s_{d,g}(b,b) / \prod_{i=1}^b S_{d-2} $, which we denote by $\overline{\Adm}_{0,b,d}^s$.
  Finally, the Hurwitz space specified in the Picard Rank Conjecture (see \cite{DPPic,MullanePRC}) is the quotient $\overline{\Adm}^s_{0,b,d} / S_b$, which we denote by $\Hbar^s_{d,g}$.

  The Hurwitz spaces $\Hbar_{d,g}(\underline{\mu})$ are the ones discussed in \cite{SvZ} and \cite{FPRelative}.
  We denote the connected admissible covers of \cite{ACV} by $\overline{\operatorname{Adm}}_{0,m,d}$.
  As $m$ varies, the collection of $\overline{\operatorname{Adm}}_{0,m,d} / S_m$ stratifies the Hurwitz spaces $\Hbar_{d,g}$ discussed in \cite{CLlowhurwitz} and in \cite{Zhengtrigonal}.
  In those papers, they work with unmarked, arbitrarily branched covers, so branch points are allowed to collide.

\section{Fully-Marked Monodromy representations}\label{monorep}

The geometric data of a degree-$d$ admissible cover can be recast as the combinatorial data of \textit{monodromy representations}.
This provides an alternative perspective on Hurwitz spaces relying only on combinatorial data, making explicit computations easier.

First, we define families of labeled admissible covers.

\begin{definition}{(Labeled admissible covers).}
  A family of labeled admissible covers of type $(\underline{d}, \underline{g}, \underline{\mu})$ over a scheme $S$ is the data of a fully-marked admissible cover as in \Cref{fullmkadm}, along with a diagram

  \begin{center}
    \begin{tikzcd}[column sep=2em, row sep=3em]
    C \arrow[rd] \arrow[rr, "f"] &                                                                 & D \arrow[ld] \\
                                 & S \arrow[lu, "L_k", bend left] \arrow[ru, "q"', bend right] &
    \end{tikzcd}
  \end{center}
  \noindent where:

  \begin{itemize}
    \item The section $q$ lies in the smooth locus of $D$ and is disjoint from all marked points $b_i$ on $D$.
    \item The $d$ sections $L_k$ on $C$ satisfy $f^{-1}(q) = \sum_{k=1}^d L_k$.
  \end{itemize}
  We call these sections $\{L_k\}$ the \textit{labeling} of $f$, and the tuple $(f, \{L_k\})$ a \textit{labeled admissible cover}.
\end{definition}

To construct a monodromy representation associated to a labeled admissible cover, we fix the following:
\begin{itemize}
  \item Let $S$ be a closed point, and consider a labeled admissible cover $f: C \to D$ over $S$.
  \item Fix an isomorphism $D \cong \P^1$ such that the first three marked points of the marked locus $B \subseteq D$ are sent to $0, 1,$ and $\infty$.
  \item Fix a generating set $\{\gamma_i\}_{i=1}^m$ for $\pi_1(\P^1 \setminus B, q)$ where $\gamma_i$ is a loop based at $q$ winding around $b_i$ once in the clockwise direction.
\end{itemize}

\noindent For any $\gamma_i$ in the generating set for $\pi_1(\P^1 \setminus B, q)$, consider its lift to a path $\widetilde{\gamma}_{i, k}$ in $C$ starting at $L_k$, i.e. $\widetilde{\gamma}_{i,k}(0) = L_k$.
The other endpoint $\widetilde{\gamma}_{i,k}(1)$ of this lift must also lie in $f^{-1}(q)$.
Define $\sigma_i \in S_d$ by $k \mapsto \sigma_i (k)$ where $\widetilde{\gamma}_{i,k}(1) = L_{\sigma_i(k)}$.
Observe that the cycle type of $\sigma_i$ is exactly the ramification profile of $f$ at $b_i$.
For each $\gamma_i$, we can associate such a permutation $\sigma_i$, producing a homomorphism $\pi_1(\P^1 \setminus B, q) \to S_d$.
We package the data of this homomorphism by the ordered tuple $(\sigma_1, \dots, \sigma_m)$.
Finally, we keep track of the marking on $f^{-1}(B)$ by marking the cycles in each of the $\sigma_i$.
For example, in \Cref{fig:markedcover}, the ramification point $b_i$ has profile $2,2,1$.
Given the labeling $\{L_k\}$ on $f^{-1}(q)$, the associated permutation to $b_i$ is $(12)(3)(45)$, where $(12)$ is marked as $p_i^1$, $(3)$ is marked as $p_i^2$ and $(45)$ is marked as $p_i^3$.

\begin{figure}[h]
    \centering
    \includegraphics[scale=0.2]{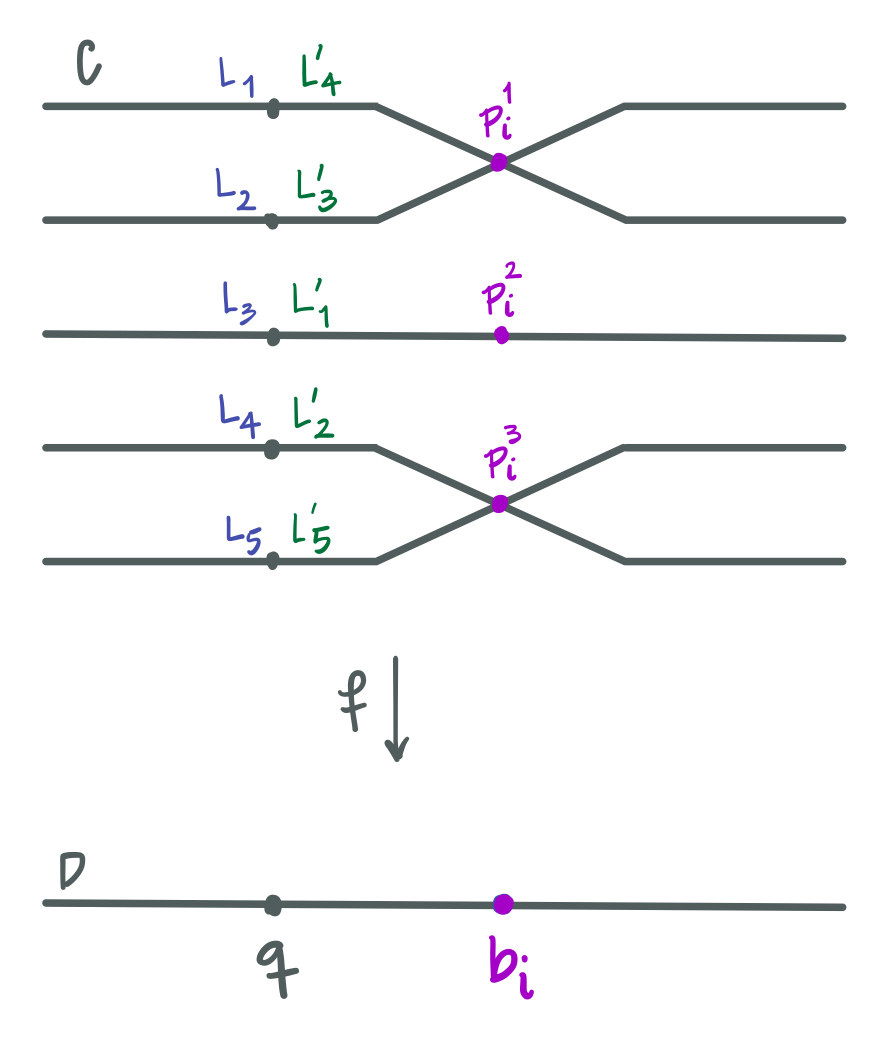}
    \caption{A $1$-dimensional picture of an admissible cover with two labelings $\{L_k\}$ and $\{L'_k\}$ of $f^{-1} (q)$. In terms of $\{L_k\}$, the fiber over $b_i$ is associated to the permutation $(12)(3)(45)$, with markings $p_i^1, p_i^2, p_i^3$ respectively.}
    \label{fig:markedcover}
\end{figure}

Then the \textit{fully-marked monodromy representation} of the labeled admissible cover $(f,L)$ consists of
\begin{enumerate}
  \item a monodromy representation $(\sigma_1, \dots, \sigma_m)$, and
  \item a marking $p_i^j$ on the cycles in each permutation $\sigma_i$.
\end{enumerate}

Two labeled admissible covers $(f \colon C \to \P^1, \{L_k\})$ and $(f' \colon C' \to \P^1, \{L'_k\})$ are isomorphic if there is an isomorphism $\phi_{\operatorname{src}}: C \to C'$ between the fully-marked admissible covers $f$ and $f'$ (see \Cref{covermorphisms}) such that $\phi_{\operatorname{src}} \circ L_k = L'_k$.
Isomorphic labeled admissible covers have identical monodromy representations, by the following argument.
Let $\gamma$ be any generator of $\pi_1(\P^1 \setminus B, q)$.
Let $\sigma$ and $\tau$ be the permutations associated to lifting $\gamma$ via $f$ and $f'$, respectively.
Then, for any $k \in \{1, \dots, d\}$, we have that $\sigma$ is defined by $\widetilde{\gamma}_k(1) = L_{\sigma(k)}$ and $\tau$ is defined by $\widetilde{\gamma}'_k (1) = L'_{\tau(k)}$.
Applying $\phi_{\operatorname{src}}$ to the first equation, we find that:

\begin{align*}
  \widetilde{\gamma}_k (1) &= L_{\sigma(k)}\\
 \phi_{\operatorname{src}}(\widetilde{\gamma}_k (1)) &= \phi_{\operatorname{src}}(L_{\sigma(k)})\\
  \widetilde{\gamma}'_{k}(1) &= L'_{\sigma(k)}.
\end{align*}

\noindent This last step follows since $\phi_{\operatorname{src}}$ is an isomorphism and the lifts $\widetilde{\gamma}_k$ and $\widetilde{\gamma}'_k$ are continuous.
Therefore $\tau(k) = \sigma(k)$.
For this reason, if two labelings of $f$ differ by an automorphism of $f$, then they have identical monodromy representations.

In general, given any two labelings $\{L_k\}$ and $\{L'_k\}$ for $f$, the pairs $(f, \{L_k\})$ and $(f, \{L'_k\})$ have ``simultaneously conjugate'' monodromy representations.
This means that if $(f, \{L_k\}) $ and $ (f, \{L'_k\})$ have the monodromy representations $(\sigma_1, \sigma_2, \dots, \sigma_n) $and $ (\tau_1, \tau_2, \dots, \tau_n)$ respectively, then there exists some $\omega \in S_d$ such that $\sigma_i = \omega \tau_i \omega^{-1}$ for every $i$.
In this case, we have $L_k = L'_{\omega(k)}$.
Furthermore, the markings on $\sigma_i$ and $\tau_i$ have the following relationship:
the marking on the cycle $(a_1 a_2 \cdots a_r)$ of $\sigma_i$ is equal to the marking on the cycle $(\omega(a_1) \omega(a_2) \cdots \omega(a_r))$ of $\tau_i$.
For example, in \Cref{fig:markedcover}, the fiber over $b_i$ is associated to the permutation $(34)(1)(25)$ in terms of the labeling $\{L'_k\}$, with markings $p_i^1, p_i^2, p_i^3$ respectively.

Let the simultaneous conjugacy class $[\sigma_1, \dots, \sigma_m]$ be the set of all tuples $(\tau_1, \dots, \tau_m)$ which are simultaneously conjugate to $(\sigma_1, \dots, \sigma_m)$.
Due to the relation in $\pi_1(\P^1 \setminus B, q)$, the set $\{\sigma_1, \sigma_2, \dots, \sigma_n\}$ must satisfy $\sigma_1 \sigma_2 \cdots \sigma_n = 1$. If the admissible cover $f$ is connected, then $\{\sigma_1, \sigma_2, \dots, \sigma_n\}$ acts transitively on $\{1, 2, \dots, d\}$.
By the previous discussion, the isomorphism class $[f]$ in the Hurwitz space can be associated to $([\sigma_1, \dots, \sigma_m], \{p_i^j\})$.

Conversely, given a tuple $[\sigma_1, \sigma_2, \dots, \sigma_m]$ of permutations from $S_d$ which multiply to the identity and a marking $\{p_i^j\}$ on the cycles in each $\sigma_i$, one can construct a fully-marked admissible cover of $D$.
We give an idea of the construction here.
Pick some representative $(\sigma_1, \sigma_2, \dots, \sigma_m)$.
Consider the polygon $P$ resulting from choosing $q \in \P^1 \setminus B$ and cutting $\P^1$ along simple paths from $q$ to each of the branch points in $B$.
Take $d$ copies of $P$ and denote them by $P^1, P^2, \dots, P^d$.
Denote the sides of polygon $P^k$ corresponding to the path from $q$ to $b_i$ by $u^k_{i}, v^k_{i}$.
We glue these polygons together using $(\sigma_1, \sigma_2, \dots, \sigma_m)$ via the following rule: the side $u^k_{i}$ on $P^k$ should be glued to the side $v^{\sigma_i (k)}_i$ on $P^{\sigma_i(k)}$.
This results in a (possibly disconnected) punctured surface which smoothly covers $\P^1 \setminus B$ with degree $d$.
The Riemann existence theorem guarantees the existence of a unique map of Riemann surfaces which extends this covering over the punctures (see Proposition 1.2 of \cite{FultonHurSch}).
We then add markings in the natural way: over a (filled-in) point $b_i$ in the target curve, we have as many points as there are cycles in $\sigma_i$. Each point corresponds to a cycle, so it inherits the cycle's marking $p_i^j$.
As discussed previously, any simultaneous conjugate of $(\sigma_1, \dots, \sigma_m)$ results in the same cover of Riemann surfaces, since it corresponds to choosing a different labeling.

\subsection{The target map}\label{targetmap}
There is a natural \textit{target map} $T\colon \Hbar_{\underline{d}, \underline{g}}(\underline{\mu}) \to \Mbar_{0,m}$ sending an admissible cover $f \colon C \to D$ to the target curve $D$ marked at $m$ points.
This is a covering map, ramified over the boundary, whose generic fiber at a point $(D, B) \in \M_{0,4}$ is the set of isomorphism classes of fully-marked admissible covers which have marked locus $B \subseteq D$.
Once we fix generators for $\pi_1(D\setminus B, q)$, we have a bijection between the points in this fiber and the set of simultaneous conjugacy classes of fully-marked monodromy representations whose permutations have cycle type equal to the prescribed ramification profiles and multiply to the identity.
Consequently, the sheets of the Hurwitz space above $\Mbar_{0,m}$ can be identified with these fully-marked monodromy representations.
This helps us calculate the monodromy of the target map, which we carry out in \Cref{monotarget}.

\section{The boundary divisors of $\Hbar_{\underline{d}, \underline{g}}(\underline{\mu})$}\label{boundarygeometry}

In this section, we explain why the boundary divisors of $\Hbar_{\underline{d}, \underline{g}}(\underline{\mu})$ are finite group quotients of products of lower-dimensional $\Hbar_{\underline{d'}, \underline{g'}}(\underline{\mu'})$.

First, we relate the boundary of the Hurwitz space with the boundary of $\Mbar_{0,m}$.
The target map sends boundary divisors of $\Hbar_{\underline{d}, \underline{g}}(\underline{\mu})$ to the boundary divisors $\{\Delta_j\}$ of $\Mbar_{0,m}$.
The divisor $\Delta_j$ parametrizes genus-0 curves $D$ consisting of two components connected by a single node, with $j$ marked points on one component $D_1$ and $b-j$ marked points on the other component $D_2$ (see the lower graph in \Cref{fig:boundarydivisors} for the dual graph of $\Delta_j$).
There are multiple isomorphic components in $\Delta_j$, corresponding to the different labelings of the marked points.
Above $\Delta_j$, the boundary divisors of $\Hbar_{\underline{d}, \underline{g}}(\underline{\mu})$ parametrize admissible covers of $D$ where the fibers over the marked points have the ramification profiles $\mu_i$.
On each component of $D$, the permutations associated to the marked points and the node must multiply to the identity.
Therefore, the ramification profile over the node is constrained by the ramification profiles of the fibers over the marked points on either component.
We can associate a dual graph to an admissible cover by taking the usual dual graph construction of a marked curve for both the source and target curves (see details in Section 3.2 of \cite{CMRTrop}).
Additionally, we label the edges and half-edges in the source graph with the ramification index.
In this way, every boundary divisor in the Hurwitz space has a dual graph of admissible covers $\Gamma$, and we refer to such a boundary divisor by $\Delta_\Gamma$.
There may be multiple $\Delta_\Gamma$ above each component of $\Delta_j$, as in \Cref{fig:boundarydivisors}.

\begin{figure}[h]
    \centering
    \includegraphics[scale=0.25]{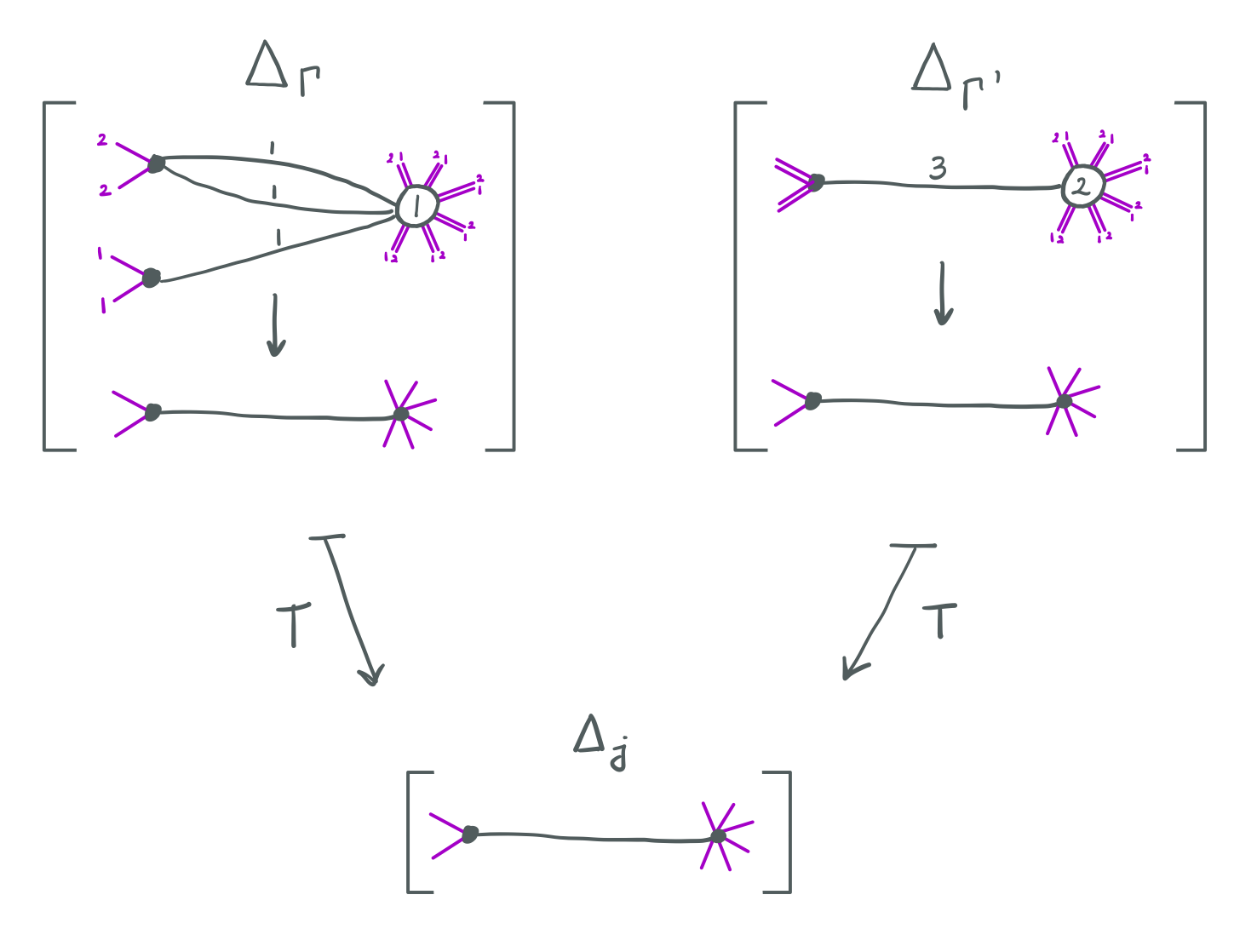}
    \caption{The dual graphs of two boundary divisors of $\Hbar_{3,2}((2,1)^8)$ lying above a single component of $\Delta_j$ for $j=2$ (with marking data suppressed to minimize cluttering the picture). The two branch points which have bubbled off to the left vertex have associated transpositions in $S_3$. If these transpositions are identical, then the resulting boundary divisor is $\Delta_\Gamma$. If these transpositions have one element in common, then the resulting boundary divisor is $\Delta_{\Gamma'}.$}
    \label{fig:boundarydivisors}
\end{figure}
There are natural gluing maps
\begin{equation}\label{gluingmap}
  \begin{aligned}
    \xi_\Gamma\colon \Hbar_{\underline{d_1}, \underline{g_1}}(\underline{\mu_1}) \times \Hbar_{\underline{d_2}, \underline{g_2}}(\underline{\mu_2}) & \xlongrightarrow{} \Hbar_{\underline{d_3}, \underline{g_3}}(\underline{\mu_3})
  \end{aligned}
\end{equation}
\noindent associated to each $\Delta_\Gamma$ such that $\underline{d_1}, \underline{d_2}$ are partitions of $d_3$ and $(m_1 - 1) + (m_2 -1) = m_3$, where $m_i$ is the length of $\underline{\mu_i}$.
The gluing map is given by specifying that marked points should be identified if they correspond to an edge in the dual graph.
Then $\Delta_\Gamma$ is the quotient of the image of $\xi_\Gamma$ by the group of graph automorphisms $\Aut(\Gamma)$.
For example, the divisor $\Delta_\Gamma$ in \Cref{fig:boundarydivisors} is the quotient of the image of the gluing map $\xi_\Gamma \colon \Hbar_{(2,1),(0,0)}((2,1)^2, (1,1,1)) \times \Hbar_{3,1}((2,1)^6, (1,1,1)) \to \Hbar_{3,2}((2,1)^8)$ by $S_2$, since there are $\vert S_2 \vert$-many ways to choose the labelings on the preimage of the glued fiber.

\begin{remark}
Examining the analogous gluing maps to the boundary of $\Hbar^s_{d,g}$ illustrates how $\Hbar^s_{d,g}$ is not stratified by lower-dimensional $\Hbar^s_{d',g'}$, but rather by Hurwitz spaces of possibly disconnected, arbitrarily-branched admissible covers with some number of marked fibers.
For example, the Hurwitz spaces in the domain of the gluing maps to the boundary divisors of $\Hbar^s_{d,g}$ parametrize covers with one marked fiber.
As the codimension of the boundary stratum increases, more marked fibers are necessary to define the gluing maps.
Consequently, any inductive proof of odd cohomology vanishing for $\Hbar^s_{d,g}$ in the style of \Cref{main} necessarily involves studying the more general spaces $\Hbar_{\underline{d},\underline{g}}(\underline{\mu})$.
\end{remark}

\section{Vanishing of $H^1$}
In this section, we prove that the cohomology of $\Hbar_{\underline{d}, \underline{g}}(\underline{\mu})$ injects into the cohomology of the normalization of the boundary whenever it injects into the cohomology of the boundary (\Cref{injlemma}).
We use this result to prove that the odd cohomology of $\Hbar_{\underline{d}, \underline{g}}(\underline{\mu})$ vanishes for all $\underline{d}, \underline{g},$ and $\underline{\mu}$ if certain base cases have this property (\Cref{indodd}).
We apply this argument for $H^1$ and explain how to compute the base cases.

\subsection{Injectivity of pullback map on cohomology}\label{injectivitysection}
Recall that the target map $T\colon \calH_{\underline{d}, \underline{g}}(\underline{\mu}) \to \M_{0,m}$ is a finite cover (see \Cref{targetmap}).
As $\M_{0,m}$ is affine of dimension $m-3$, it follows that $\H_{\underline{d}, \underline{g}}(\underline{\mu})$ is also affine of dimension $m-3$.
Therefore the homology groups $H_k(\calH_{\underline{d}, \underline{g}}(\underline{\mu}))$ vanish for $k>m - 3$, and by Poincar\'e duality, the compactly supported cohomology $H_c^k(\calH_{\underline{d}, \underline{g}}(\underline{\mu}))$ vanishes for $k<m - 3$.
The long exact sequence for compactly supported cohomology is as follows:

\[
\cdots \to H_c^k (\calH_{\underline{d}, \underline{g}}(\underline{\mu})) \to H^k (\calHbar_{\underline{d}, \underline{g}}(\underline{\mu})) \to H^k(\del \calHbar_{\underline{d}, \underline{g}}(\underline{\mu})) \to H_c^{k+1}(\calH_{\underline{d}, \underline{g}}(\underline{\mu})) \to \cdots
\]

Hence the map  $H^k(\calHbar_{\underline{d}, \underline{g}}(\underline{\mu})) \to H^k (\del \calHbar_{\underline{d}, \underline{g}}(\underline{\mu}))$ is injective when $k \leq m -4 $.
Now we show that $H^k(\Hbar_{\underline{d}, \underline{g}}(\underline{\mu}))$ also injects into the cohomology of the normalization of the boundary in this range.

\begin{lemma}[Injectivity lemma]\label{injlemma}
  Let $\operatorname{\xi_\Gamma}\colon \Hbar_\Gamma \to \partial\Hbar_{\underline{d}, \underline{g}}(\underline{\mu})$ be the gluing map associated to the boundary divisor parametrizing admissible covers with dual graph $\Gamma$, where each $\Hbar_\Gamma$ is a product $\Hbar_{\underline{d_1}, \underline{g_1}}(\underline{\mu_1}) \times \Hbar_{\underline{d_2}, \underline{g_2}}(\underline{\mu_2})$.
  If $\rho\colon H^k(\calHbar_{\underline{d}, \underline{g}}(\underline{\mu})) \to H^k(\partial \calHbar_{\underline{d}, \underline{g}}(\underline{\mu}))$ is injective, then the map $H^k(\calHbar_{\underline{d}, \underline{g}}(\underline{\mu})) \to \oplus_\Gamma H^k(\Hbar_\Gamma)$ is injective, where the direct sum is taken over all boundary divisors.
\end{lemma}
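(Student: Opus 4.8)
The plan is to factor the map in the statement through the normalization of the boundary and then reduce the whole question to a weight computation. Recall from \Cref{boundarygeometry} that each gluing map $\xi_\Gamma$ is invariant under $\Aut(\Gamma)$, and that the induced map $\Hbar_\Gamma/\Aut(\Gamma) \to \partial\Hbar_{\underline{d},\underline{g}}(\underline{\mu})$ is the normalization of the boundary divisor $\Delta_\Gamma$ onto its image. Hence $\nu\colon B \to \partial\Hbar_{\underline{d},\underline{g}}(\underline{\mu})$, with $B = \bigsqcup_\Gamma \Hbar_\Gamma/\Aut(\Gamma)$, is the normalization of the full boundary, and the map of the lemma is the composite
\[
H^k(\Hbar_{\underline{d},\underline{g}}(\underline{\mu})) \xrightarrow{\ \rho\ } H^k(\partial\Hbar_{\underline{d},\underline{g}}(\underline{\mu})) \xrightarrow{\ \nu^*\ } H^k(B) = \bigoplus_\Gamma H^k(\Hbar_\Gamma)^{\Aut(\Gamma)} \hookrightarrow \bigoplus_\Gamma H^k(\Hbar_\Gamma).
\]
With $\Q$-coefficients the pullback along the finite quotient $\Hbar_\Gamma \to \Hbar_\Gamma/\Aut(\Gamma)$ identifies $H^k(\Hbar_\Gamma/\Aut(\Gamma))$ with the invariants $H^k(\Hbar_\Gamma)^{\Aut(\Gamma)}$, so the final arrow is injective for free; and $\rho$ is injective by hypothesis. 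Thus the entire problem reduces to showing that $\nu^*$ is injective \emph{on the image of} $\rho$.

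For this I would run a mixed Hodge theory argument, in the spirit of \cite{ACcoh}. Since $\Hbar_{\underline{d},\underline{g}}(\underline{\mu})$ is a smooth and proper Deligne--Mumford stack, $H^k(\Hbar_{\underline{d},\underline{g}}(\underline{\mu});\Q)$ is pure of weight $k$. As $\rho$ is a morphism of mixed Hodge structures, it is strictly compatible with the weight filtrations, and since its source is pure of weight $k$, strictness gives $W_{k-1}\big(\mathrm{im}\,\rho\big) = \rho\big(W_{k-1}H^k(\Hbar_{\underline{d},\underline{g}}(\underline{\mu}))\big) = 0$; that is, $\mathrm{im}\,\rho$ is itself pure of weight $k$, so $\mathrm{im}\,\rho \cap W_{k-1}H^k(\partial\Hbar_{\underline{d},\underline{g}}(\underline{\mu})) = 0$. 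On the other hand, the kernel of the normalization pullback is exactly the sub-top-weight part, $\ker\big(\nu^*\colon H^k(\partial) \to H^k(B)\big) = W_{k-1}H^k(\partial\Hbar_{\underline{d},\underline{g}}(\underline{\mu}))$. Combining the two, $\ker(\nu^*)\cap\mathrm{im}\,\rho = 0$, so $\nu^*$ is injective on $\mathrm{im}\,\rho$ and the composite $\nu^*\circ\rho$ is injective, as desired.

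The main obstacle is precisely the weight identity $\ker(\nu^*) = W_{k-1}H^k(\partial)$, since $\nu^*$ is genuinely \emph{not} injective in general: a graph $\Gamma$ with a self-node glues components so as to create a loop, contributing a class of weight $k-1$ to $H^k(\partial)$ that dies under normalization. I would justify the identity via Deligne's weight spectral sequence for the closed cover of $\partial\Hbar_{\underline{d},\underline{g}}(\underline{\mu})$ by its irreducible components $\{\Delta_\Gamma\}$: its $E_1$-page assembles the cohomology of the smooth proper multiple intersections of the normalized components, the edge map $H^k(\partial) \to H^k(B)$ is $\nu^*$, and $\mathrm{Gr}^W_k H^k(\partial) = E_\infty^{0,k}$ sits as a subspace of $H^k(B)$ while all lower-weight pieces form the kernel. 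The remaining points are routine: that the $\Aut(\Gamma)$-quotient description of the normalized boundary strata from \Cref{boundarygeometry} is compatible with these Hodge structures (automatic with $\Q$-coefficients, since passing to $\Aut(\Gamma)$-invariants is exact and preserves mixed Hodge structures), and that all stacks in sight are proper so that purity and the weight spectral sequence apply.
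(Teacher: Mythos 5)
Your proof is correct and takes essentially the same route as the paper's: both reduce the lemma to showing $\operatorname{im}\rho \cap W_{k-1}H^k(\partial \calHbar_{\underline{d}, \underline{g}}(\underline{\mu})) = 0$ via purity of $H^k(\calHbar_{\underline{d}, \underline{g}}(\underline{\mu}))$ (smooth and proper) and strictness of morphisms of mixed Hodge structures, combined with the fact that the kernel of the pullback to the (smooth, proper) covers of the boundary is exactly $W_{k-1}$ --- a fact the paper imports from \cite[Prop.~2.7]{ACcoh} where you justify it with the weight spectral sequence. Your additional factorization through the $\Aut(\Gamma)$-quotients is a harmless refinement, since with $\Q$-coefficients those pullbacks identify cohomology with invariants and are injective.
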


\begin{proof}
  Let $u\colon \amalg_\Gamma \Hbar_\Gamma \to \partial\calHbar_{\underline{d}, \underline{g}}(\underline{\mu})$ be the disjoint union of the gluing maps $\xi_\Gamma$.
  It is a proper and surjective map.
  Similarly to the discussion for $\calMbar_{g,n}$ in \cite[Prop. 2.7]{ACcoh}, the weight $k$ quotient \[H^k(\partial \calHbar_{\underline{d}, \underline{g}}(\underline{\mu}))/W_{k-1}H^k (\partial \calHbar_{\underline{d}, \underline{g}}(\underline{\mu}))\] is the image of $H^k(\partial \calHbar_{\underline{d}, \underline{g}}(\underline{\mu}))$ under $u^*$.
  Therefore we can factor $u^*$ as in the following diagram:

  \begin{center}
    \begin{tikzcd}[row sep=4em, column sep=1em]
  {H^k(\calHbar_{\underline{d}, \underline{g}}(\underline{\mu}))} \arrow[rr, "\rho"]  &  & {H^k(\partial \calHbar_{\underline{d}, \underline{g}}(\underline{\mu}))} \arrow[rr, "u^*"] \arrow[rrd, "f^*", two heads] &  & H^k (\amalg_\Gamma \Hbar_\Gamma)                                                                  \\
  &  &                                                                &  & {H^k(\partial \calHbar_{\underline{d}, \underline{g}}(\underline{\mu}))/W_{k-1}H^k (\partial \calHbar_{\underline{d}, \underline{g}}(\underline{\mu}))}. \arrow[u]
  \end{tikzcd}
  \end{center}

To show that $u^* \circ \rho$ is injective, it suffices to show that $f^*$ is injective on the image of $\rho$.
As $\rho$ is a restriction map, it is a morphism of mixed Hodge structures and must respect the weight filtration.
Furthermore, the cohomology $H^k(\Hbar_{\underline{d}, \underline{g}}(\underline{\mu}))$ is pure weight $k$, since $\Hbar_{\underline{d}, \underline{g}}(\underline{\mu})$ is smooth.
Hence the intersection $\rho(H^k(\calHbar_{\underline{d}, \underline{g}}(\underline{\mu}))) \cap W_{k-1}H^k(\partial \calHbar_{\underline{d}, \underline{g}}(\underline{\mu}))$ is zero.
\end{proof}

\subsection{Inductive vanishing of odd cohomology}

\begin{proposition}\label{indodd}
Let $k$ be an odd integer.
Suppose that $H^q(\Hbar_{\underline{d},\underline{g}}(\underline{\mu})) = 0$ for all odd $q \leq k$ and parameters $(\underline{d},\underline{g},m,\underline{\mu})$ such that $q > m-4$.
Then $H^q(\Hbar_{\underline{d},\underline{g}}(\underline{\mu})) = 0$ for all odd $q \leq k$ and all parameters $(\underline{d},\underline{g},m,\underline{\mu})$.
\end{proposition}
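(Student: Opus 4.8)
The plan is to induct on $m$, using the injectivity lemma (\Cref{injlemma}) to reduce the vanishing of $H^q(\Hbar_{\underline{d},\underline{g}}(\underline{\mu}))$ to the vanishing of $H^q$ on the boundary strata, which by \Cref{gluingmap} are finite quotients of products of lower-$m$ Hurwitz spaces. The hypothesis gives us the vanishing statement for the ``unstable'' range $q > m-4$ outright, so the content is to establish it in the stable range $q \le m-4$, where \Cref{injlemma} has teeth. First I would fix an odd $q \le k$ and argue by induction on $m$. For the base of the induction, when $m$ is small enough that $q > m-4$, the desired vanishing is exactly the assumed hypothesis. For the inductive step, suppose $q \le m-4$ and that $H^q$ vanishes for all Hurwitz spaces with strictly fewer than $m$ marked points on the target.

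For the inductive step, the key observation is that in the range $q \le m-4$ the restriction map $\rho\colon H^q(\calHbar_{\underline{d},\underline{g}}(\underline{\mu})) \to H^q(\partial\calHbar_{\underline{d},\underline{g}}(\underline{\mu}))$ is injective, as shown in the discussion preceding \Cref{injlemma} via the long exact sequence for compactly supported cohomology (using that $\H_{\underline{d},\underline{g}}(\underline{\mu})$ is affine of dimension $m-3$). By \Cref{injlemma}, it then follows that the map
\[
  H^q(\calHbar_{\underline{d},\underline{g}}(\underline{\mu})) \longrightarrow \bigoplus_\Gamma H^q(\Hbar_\Gamma)
\]
is injective, where each $\Hbar_\Gamma = \Hbar_{\underline{d_1},\underline{g_1}}(\underline{\mu_1}) \times \Hbar_{\underline{d_2},\underline{g_2}}(\underline{\mu_2})$. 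It therefore suffices to show that $H^q(\Hbar_\Gamma) = 0$ for every boundary divisor $\Gamma$.

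To compute $H^q$ of the product, I would apply the Künneth formula: since $q$ is odd, any contribution to $H^q(\Hbar_\Gamma)$ decomposes as a sum of terms $H^a \otimes H^b$ with $a + b = q$, and because $q$ is odd, in each such term at least one of $a,b$ is odd (and strictly less than $q$, hence an odd index $\le k$). By \Cref{gluingmap}, each factor $\Hbar_{\underline{d_i},\underline{g_i}}(\underline{\mu_i})$ has target-marking number $m_i$ with $m_1,m_2 < m$, so by the inductive hypothesis every odd cohomology group $H^{\mathrm{odd}}$ of each factor vanishes. Thus each Künneth summand contains a vanishing odd factor and dies, giving $H^q(\Hbar_\Gamma) = 0$. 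Combined with the injectivity above, this yields $H^q(\calHbar_{\underline{d},\underline{g}}(\underline{\mu})) = 0$, completing the induction.

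The main obstacle is bookkeeping around the gluing maps rather than the homological algebra. I would need to verify that $\Hbar_\Gamma$ is genuinely a product of two spaces of the stated form with $m_1,m_2 < m$ (which follows from $(m_1-1)+(m_2-1)=m$ in \Cref{gluingmap}, forcing each $m_i \ge 1$ and hence each $m_i \le m-1$), and to confirm that passing from the image of $\xi_\Gamma$ to the quotient $\Delta_\Gamma$ by $\Aut(\Gamma)$ does not disturb the cohomology argument. The latter is handled because $H^q(\Delta_\Gamma;\Q) = H^q(\operatorname{im}\xi_\Gamma;\Q)^{\Aut(\Gamma)}$ is a subspace of $H^q(\Hbar_\Gamma)$, so vanishing of the latter forces vanishing of the invariants; the injectivity lemma is stated in terms of the $\Hbar_\Gamma$ precisely to sidestep this. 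One should also take care that the induction is correctly framed: the statement to be proved quantifies over \emph{all} odd $q \le k$ simultaneously, so the inductive hypothesis must supply vanishing for all odd indices $\le k$ on the lower-$m$ spaces, which is exactly what the Künneth step consumes.
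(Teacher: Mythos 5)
Your proof is correct and uses the same key ingredients as the paper --- the injectivity of restriction to the boundary in the range $q \le m-4$ via the compactly supported cohomology sequence, \Cref{injlemma}, and the K\"unneth decomposition of each $\Hbar_\Gamma$ --- but your induction is organized differently, and arguably more cleanly. The paper inducts on the odd degree $k$: its induction hypothesis kills K\"unneth factors of odd degree strictly less than $k$, which leaves the summands $H^k \otimes H^0$ and $H^0 \otimes H^k$ alive, and the paper then disposes of these by iterating the entire argument on the factors (an implicit inner induction on $m$) until $k > m_i - 4$ puts them in the base-case range. Your single induction on $m$, carrying the vanishing of \emph{all} odd degrees $q \le k$ simultaneously, absorbs those leftover summands at once: the odd factor of any summand, whether its degree equals $q$ or is smaller, lives on a space with $m_i < m$ marked fibers and dies by the $m$-inductive hypothesis, so no further iteration is needed. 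Note that your closing remark that the induction must carry all odd degrees $\le k$ is not optional bookkeeping but essential: your earlier parenthetical claim that the odd K\"unneth index is ``strictly less than $q$'' fails precisely for the $H^q \otimes H^0$ terms, and only the stronger hypothesis handles them. One small repair: $m_i \le m-1$ does not follow from $(m_1-1)+(m_2-1)=m$ together with $m_i \ge 1$; you need $m_i \ge 3$, which holds because stability forces each genus-$0$ component of the target to carry at least two marked points in addition to the node.
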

\begin{proof}
We induct on $k$. Assume the claim is true for all $q < k$ and all parameters $ (\underline{d}, \underline{g},m,\underline{\mu})$.
We need to show that $H^k(\calHbar_{\underline{d}, \underline{g}}(\underline{\mu})) = 0$ for all $ (\underline{d}, \underline{g},m,\underline{\mu})$ with $k \leq m-4$.

\Cref{injlemma} holds in this range, so $H^k(\calHbar_{\underline{d}, \underline{g}}(\underline{\mu})) \rightarrow \oplus_\Gamma H^k(\Hbar_\Gamma)$ is injective.
Each space $\Hbar_\Gamma$ is a product $ \Hbar_{\underline{d_1}, \underline{g_1}}(\underline{\mu_1}) \times \Hbar_{\underline{d_2}, \underline{g_2}}(\underline{\mu_2})$ where $\underline{d_1}, \underline{d_2}$ are partitions of $d$ and $(m_1- 1) + (m_2-1) = m$.
Crucially, the number of marked fibers $m_i$ on each factor of the domain is strictly less than $m$.
By the K\"{u}nneth formula, the cohomology groups decompose as \[H^k(\Hbar_\Gamma) = \bigoplus_{p+q = k} H^p( \calHbar_{\underline{d_1}, \underline{g_1}}(\underline{\mu_1})) \otimes H^q(\calHbar_{\underline{d_2}, \underline{g_2}}(\underline{\mu_2})).\]
As $k$ is odd, there must be a factor with odd cohomological degree in each summand.
These factors with odd degree strictly less than $k$ vanish via the induction hypothesis, and therefore $H^k(\Hbar_\Gamma)$ is a direct sum of tensor products where $p$ and $q$ are either $k$ or 0.

If $k > m_1 - 4$, then $H^k(\Hbar_{\underline{d_1}, \underline{g_1}}(\underline{\mu_1})) = 0$.
Otherwise $k \leq m_1 - 4$ and one applies the previous argument again to get an injection into a direct sum of tensor products of $H^k(\Hbar_{\underline{d_3}, \underline{g_3}}(\underline{\mu_3}))$ and $H^0(  \Hbar_{\underline{d_4}, \underline{g_4}}(\underline{\mu_4}))$, with $m_3, m_4 < m_1$.
The same reasoning holds for $H^k(\Hbar_{\underline{d_2}, \underline{g_2}}(\underline{\mu_2})$.
By iterating this argument, eventually $k$ becomes larger than $m_3 - 4, m_4 - 4,$ etc., and then $H^k(\Hbar_\Gamma) = 0$.
Therefore
$H^k(\calHbar_{\underline{d}, \underline{g}}(\underline{\mu})) = 0$.
\end{proof}

\begin{remark}\label{futurework}
  We use \Cref{indodd} to prove $H^1$ vanishing for $\calHbar_{\underline{3}, \underline{g}}(\underline{\mu})$ in this paper.
  As the analogous proposition for $\Mbar_{g,n}$ was used to prove vanishing of odd cohomology in higher degree, it may be possible to do the same for $\calHbar_{\underline{3}, \underline{g}}(\underline{\mu})$.
  Since the number of base cases grows with the cohomological degree, a successful proof would likely involve calculating Euler characteristics and a robust understanding of the combinatorics of the boundary strata of $\calHbar_{\underline{3}, \underline{g}}(\underline{\mu})$.
\end{remark}

\subsection{Computing $H^1$ for degree-$3$ covers}\label{h1proof}
As discussed in \Cref{injectivitysection}, the map $H^k(\calHbar_{\underline{d}, \underline{g}}(\underline{\mu})) \to \oplus_\Gamma H^k(\Hbar_\Gamma)$ is injective for all $k \leq m - 4$.
In particular, $H^1(\calHbar_{\underline{d}, \underline{g}}(\underline{\mu})) \to \oplus_\Gamma H^1(\Hbar_\Gamma)$ is injective whenever $1 \leq m-4$, i.e. whenever the target curve has $5$ or more marked points.
To show that $H^1(\calHbar_{\underline{d},\underline{g}}(\underline{\mu})) = 0 $ using \Cref{indodd} for $d = 3$, we must show that $H^1(\calHbar_{\underline{d},\underline{g}}(\underline{\mu})) = 0$ for every Hurwitz space of admissible covers with $3$ or $4$ marked fibers.
As Hurwitz spaces with $3$ marked fibers are 0-dimensional, we are left with the 1-dimensional spaces.
In this section, we describe how to compute the genera of these 1-dimensional Hurwitz spaces using the monodromy of the covering map to $\calMbar_{0,4}$ and the Riemann-Hurwitz formula.
In \Cref{calculationdata}, we record that the base cases for degree-$2$ and degree-$3$ covers all have genus 0.

\subsubsection{Hurwitz spaces are themselves branched covers}\label{bigmonodromy}

The target map $T\colon \Hbar_{\underline{d},\underline{g}}(\underline{\mu}) \to \Mbar_{0,m}$ is a finite covering map ramified over the boundary.
In the case where the Hurwitz space is 1-dimensional, the Riemann-Hurwitz formula applied to $T$ determines the genus of the Hurwitz space.

There are two steps in this Riemann-Hurwitz calculation: first, determine the sheets of the Hurwitz space lying over $\M_{0,m}$; second, calculate the monodromy of $T$ over the boundary $\partial\Mbar_{0,m}$.
To do this, we recast the geometric data of a point in Hurwitz space into the algebraic data of fully-marked monodromy representations, as described in \Cref{monorep}.
A word of caution: the word ``monodromy'' is used in two situations in this paper, since each point of Hurwitz space is an admissible cover with monodromy (and has a fully-marked monodromy representation), and the target map $T$ itself has monodromy.
We will endeavor to make it clear to which situation we refer.

\subsubsection{Monodromy of the target map}\label{monotarget}
Although it is possible to have the following discussion for arbitrary $m$, we restrict to $m = 4$ as we need only perform these monodromy calculations for the $1$-dimensional Hurwitz spaces.
To compute the monodromy of $T$, we must analyze the sheets of $\H_{\underline{d},\underline{g}}(\mu_1, \mu_2, \mu_3, \mu_4)$ lying over $\M_{0,4}$ and how they come together over $\partial\Mbar_{0,4}$.

For any point $[f\colon C \to D, B] \in \H_{\underline{d},\underline{g}}(\mu_1, \mu_2, \mu_3, \mu_4)$, we identify $D \cong \P^1$ and $B = \{0, 1, \infty, \lambda\}$.
Let $q \in \P^1 \setminus B$ and fix a generating set for $\pi_1(\P^1 \setminus B, q)$ consisting of four simple loops $\gamma_1, \gamma_2, \gamma_3,$ and $ \gamma_4$ based at $q$ that wind once counterclockwise around $0, 1, \infty,$ and $\lambda$ respectively in $\P^1$.
We refer to this generating set as the ``standard'' generating set, depicted in \Cref{fig:standardlollipop}.
\begin{figure}[h]
    \centering
    \includegraphics[scale=0.15]{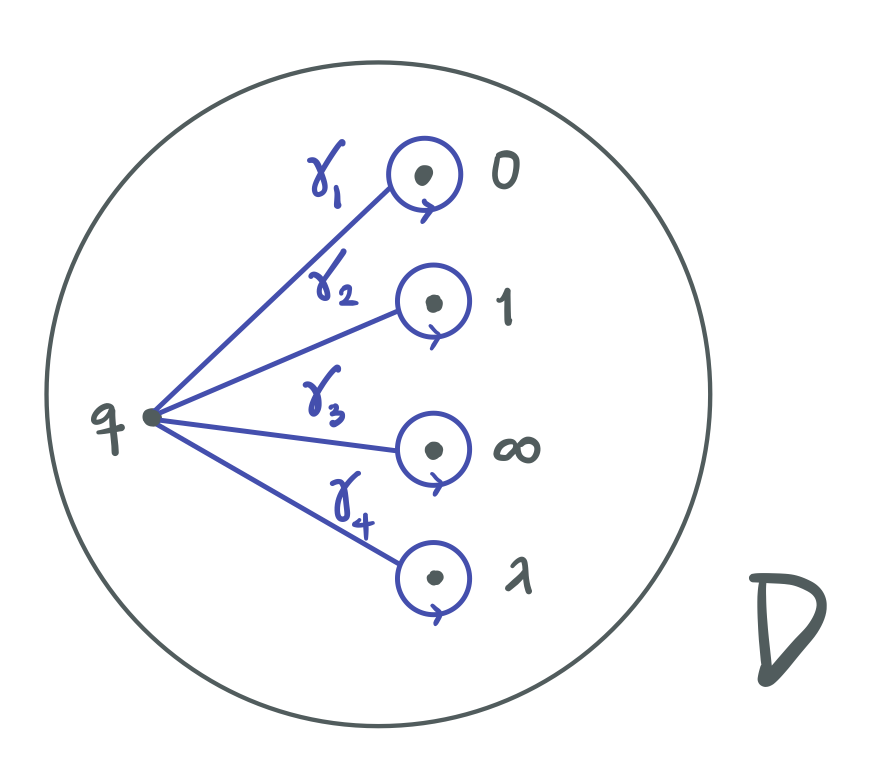}
    \caption{The standard generating set for $\pi_1(\P^1 \setminus B, q)$.}
    \label{fig:standardlollipop}
\end{figure}

Having fixed the standard generating set for $\pi_1(\P^1 \setminus B, q)$, the sheets of $\H_{\underline{d},\underline{g}}(\mu_1, \mu_2, \mu_3, \mu_4)$ are identified with the equivalence classes of fully-marked monodromy representations whose permutations have marked cycles, satisfy the specified ramification profiles, and multiply to the identity.
Furthermore, identifying $D$ with $\P^1$ allows us to identify $\Mbar_{0,4}$ with $\P^1$ in the usual manner:
\begin{align*}
  \Mbar_{0,4} &\xlongrightarrow{\sim} \P^1 \\
  [D, 0, 1, \infty, \lambda] &\longmapsto \lambda.
\end{align*}

\noindent To disambiguate these two copies of $\P^1$, we consistently use the names $D$ and $\Mbar_{0,4}$, respectively.

To determine how the sheets of the Hurwitz space come together over $\partial\Mbar_{0,4} = \{0,1,\infty\}$, we analyze what happens when we lift a loop in $\M_{0,4}$ winding around any of the points in the boundary.
Consider a small open neighborhood $N$ of $\infty \in \Mbar_{0,4}$ and let $\lambda \in \M_{0,4}$ lie inside this neighborhood.
Let $\nu$ be a loop based at $\lambda$ which wraps once around $\infty$
(see \Cref{fig:loopinftyandnewgen}).
\begin{figure}[h]
    \centering
    \includegraphics[scale=0.21]{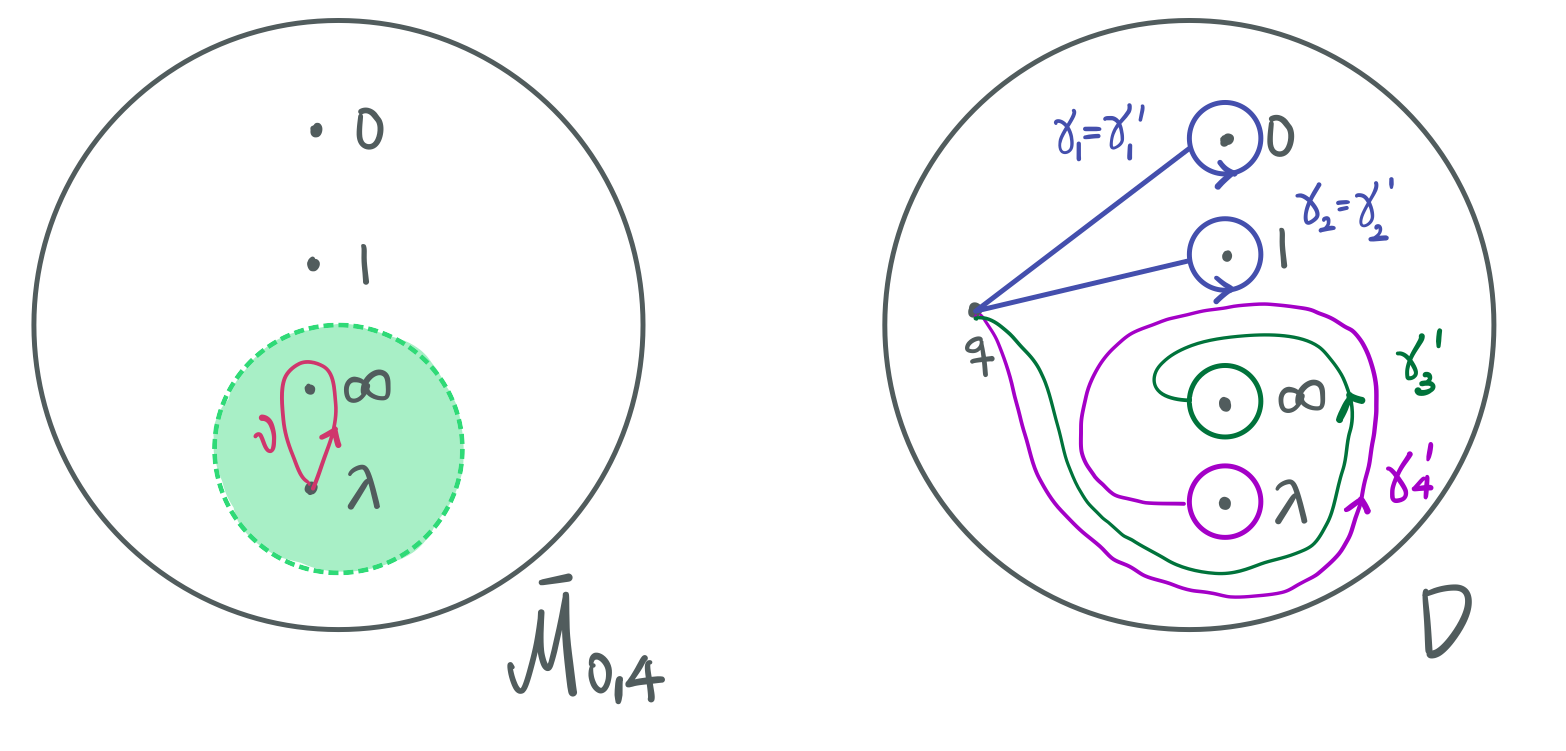}
    \caption{A loop in $\Mbar_{0,4}$ based at $\lambda$ and winding around $\infty$, which perturbs the genrators of $\pi_1(D\setminus B, q)$ into a new generating set on the right. This is a two-dimensional picture.}
    \label{fig:loopinftyandnewgen}
\end{figure}
The sheets of $\Hbar_{\underline{d},\underline{g}}(\mu_1, \mu_2, \mu_3, \mu_4)$ lying above $N$ correspond to the fiber of $\lambda \in \Mbar_{0,4}$, and therefore are given by all simultaneous conjugacy classes of permutations $[\sigma_1, \sigma_2, \sigma_3, \sigma_4]$ multiplying to the identity which have cycle types $\mu_i$
and marked cycles $p_i^j$ for every $i \in \{1,2,3,4\}$
(see \Cref{fig:localtarget}).
\begin{figure}[h]
    \centering
    \includegraphics[scale=0.18]{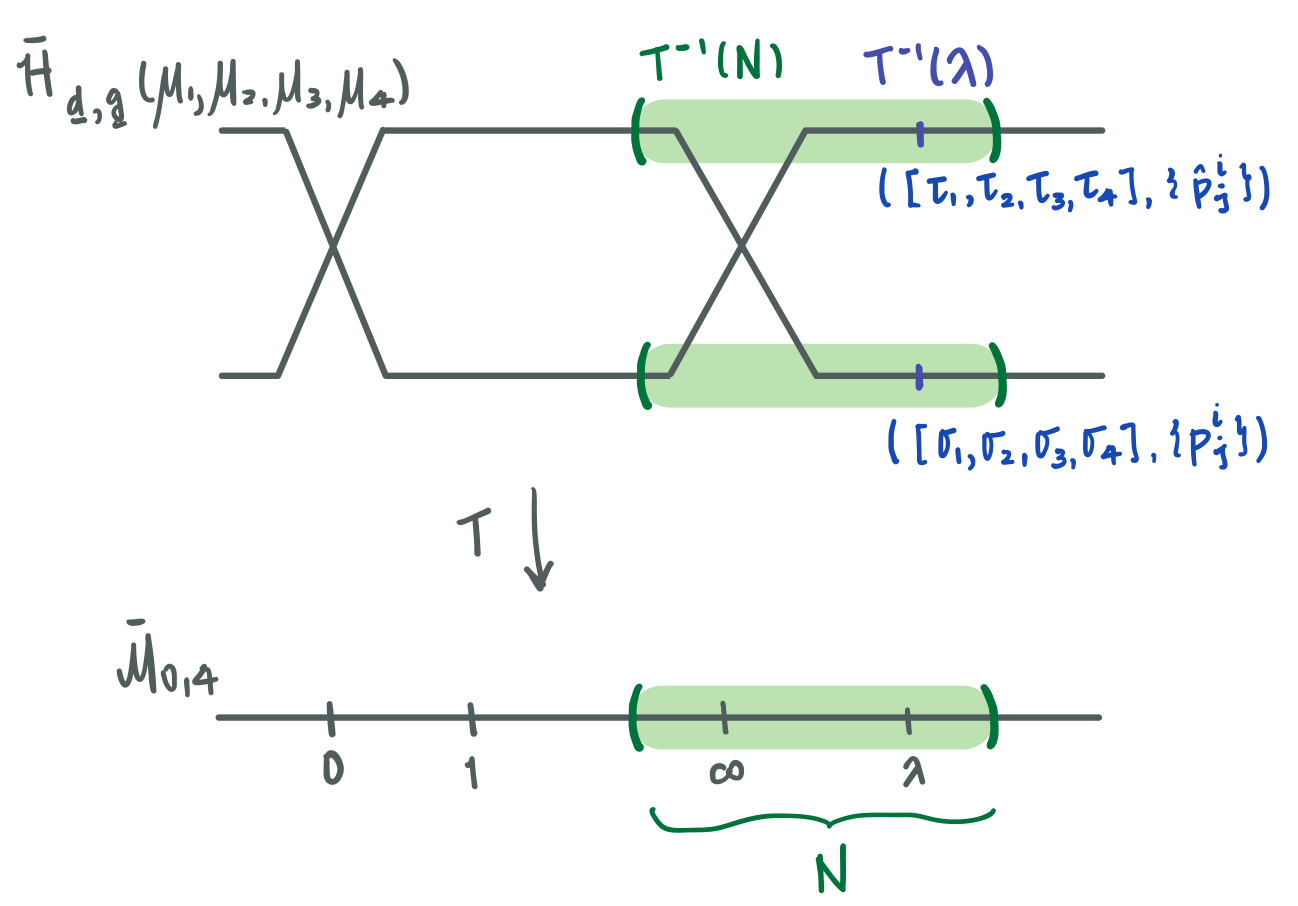}
    \caption{The target map locally near $\lambda$. Two sheets of the Hurwitz space are given by fully-marked monodromy representations. This is a one-dimensional picture.}
    \label{fig:localtarget}
\end{figure}

Consider a point in such a sheet, given by the fully-marked monodromy representation $([\sigma_1, \sigma_2, \sigma_3, \sigma_4], \{p^i_j\})$ in the preimage $T^{-1}(\lambda)$, and lift $\nu$ to a path $\widetilde{\nu}$ in $\Hbar_{\underline{d},\underline{g}}(\mu_1, \mu_2, \mu_3, \mu_4)$ starting at $([\sigma_1, \sigma_2, \sigma_3, \sigma_4], \{p^i_j\})$.
The path $\widetilde{\nu}$ corresponds to the point $\lambda \in D$ moving around $\infty \in D$, which perturbs the loops $\gamma_3$ to $\gamma'_3$ and $\gamma_4$ to $\gamma'_4$ as in \Cref{fig:loopinftyandnewgen}.
The endpoint $\widetilde{\nu}(1)$ is the monodromy representation $([\sigma_1, \sigma_2, \sigma_3, \sigma_4], \{p^i_j\})$ with respect to this new generating set $\{\gamma'_1, \gamma'_2, \gamma'_3, \gamma'_4\}$, where $\gamma'_1 = \gamma_1$ and $\gamma'_2 = \gamma_2$.
To see on which sheet $\widetilde{\nu}(1)$ lies, we calculate the monodromy representation of $\widetilde{\nu}(1)$ with respect to the standard generating set.

Suppose $[\tau_1, \tau_2, \tau_3, \tau_4]_{\text{std}}$ is the monodromy representation of $\widetilde{\nu}(1)$ with respect to the standard generating set and $[\sigma_1, \sigma_2, \sigma_3, \sigma_4]_{\text{new}}$ is the monodromy representation of $\widetilde{\nu}(1)$ with respect to the new generating set.
These two monodromy representations correspond to the same admissible cover, so $\gamma'_i \mapsto \sigma_i$ and $\gamma_i \mapsto \tau_i$ are the same group homomorphism $\pi_1(D \setminus B, q) \longrightarrow S_d$ (refer to \Cref{monorep} for details about this homomorphism).
The new generating set can be written in terms of the standard generating set as:
\vspace{-6mm}
\begin{align*}
  \gamma'_1 &= \gamma_1\\
  \gamma'_2 &= \gamma_2\\
  \gamma'_3 &= \gamma_4 \gamma_3 \gamma^{-1}_4\\
  \gamma'_4 &= \gamma_4 \gamma_3 \gamma_4 \gamma^{-1}_3 \gamma^{-1}_4.
\end{align*}
\vspace{-6mm}

\noindent Therefore the monodromy representation with respect to the standard generating set must be:

\vspace{-6mm}
\begin{equation}\label{formulainfty}
  \begin{aligned}
    \tau_1 &= \sigma_1\\
    \tau_2 &= \sigma_2\\
    \tau_3 &= \sigma_3 \sigma_4 \sigma_3 \sigma^{-1}_4\sigma^{-1}_3\\
    \tau_4 &= \sigma_3 \sigma_4 \sigma^{-1}_3,
  \end{aligned}
\end{equation}

\vspace{-6mm}
\noindent Observe that each $\tau_i$ has the form $\nu \sigma_i \nu^{-1}$.
The marking on the cycle $(a_1 a_2 \cdots a_r)$ of $\tau_i$ must be equal to the marking on the cycle $(\nu^{-1}(a_1) \nu^{-1}(a_2) \cdots \nu^{-1}(a_r))$.
Hence $\widetilde{\nu}(1)$ lies on the sheet of $\Hbar_{\underline{d},\underline{g}}(\mu_1, \mu_2, \mu_3, \mu_4)$ given by the monodromy representation $[\sigma_1, \sigma_2, \sigma_3 \sigma_4 \sigma_3 \sigma^{-1}_4\sigma^{-1}_3, \sigma_3 \sigma_4 \sigma^{-1}_3]_\textrm{std}$ and marking specified by the previous statement.

\begin{figure}[h]
    \centering
    \includegraphics[scale=0.2]{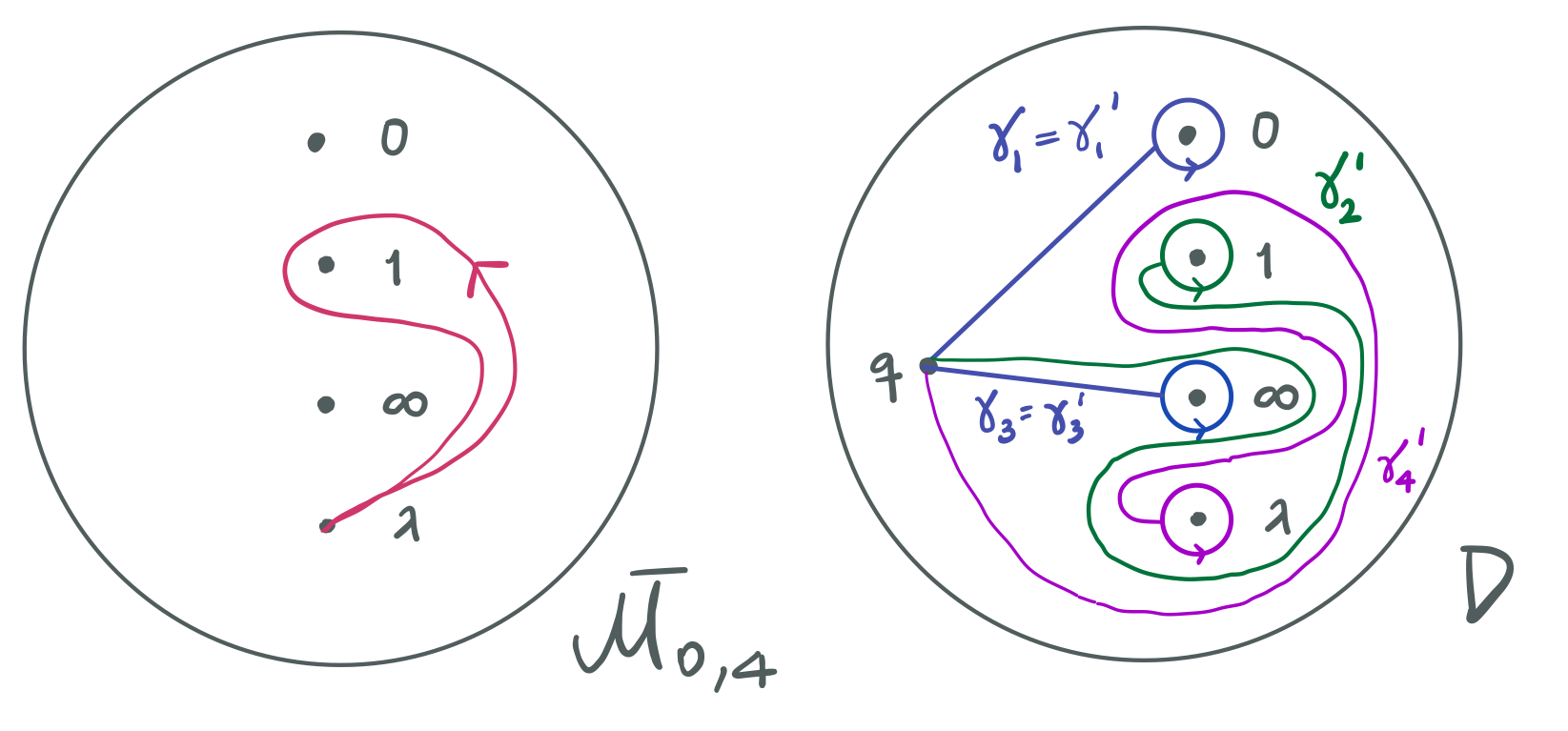}
    \caption{A loop in $\Mbar_{0,4}$ based at $\lambda$ and winding around $1$, which perturbs the generators of $\pi_1(D \setminus B, q)$ into a new generating set on the right. This is a two-dimensional picture.}
    \label{fig:loopone}
\end{figure}
\pagebreak
A similar calculation can be done for a loop $\nu$ based at $\lambda$ and winding once around $1$ (see \Cref{fig:loopone}). The resulting monodromy representation with respect to the standard generating set is:

\begin{equation}\label{formulaone}
\begin{aligned}
  \tau_1 &= \sigma_1\\
  \tau_2 &= \sigma_2 \sigma_3 \sigma_4 \sigma_3^{-1}\sigma_2 \sigma_3 \sigma_4^{-1} \sigma_3^{-1} \sigma_2^{-1}\\
  \tau_3 &= \sigma_3\\
  \tau_4 &= \sigma_3^{-1}\sigma_2 \sigma_3 \sigma_4 \sigma_3^{-1}\sigma_2^{-1}\sigma_3.
\end{aligned}
\end{equation}

\begin{figure}[h]
    \centering
    \includegraphics[scale=0.2]{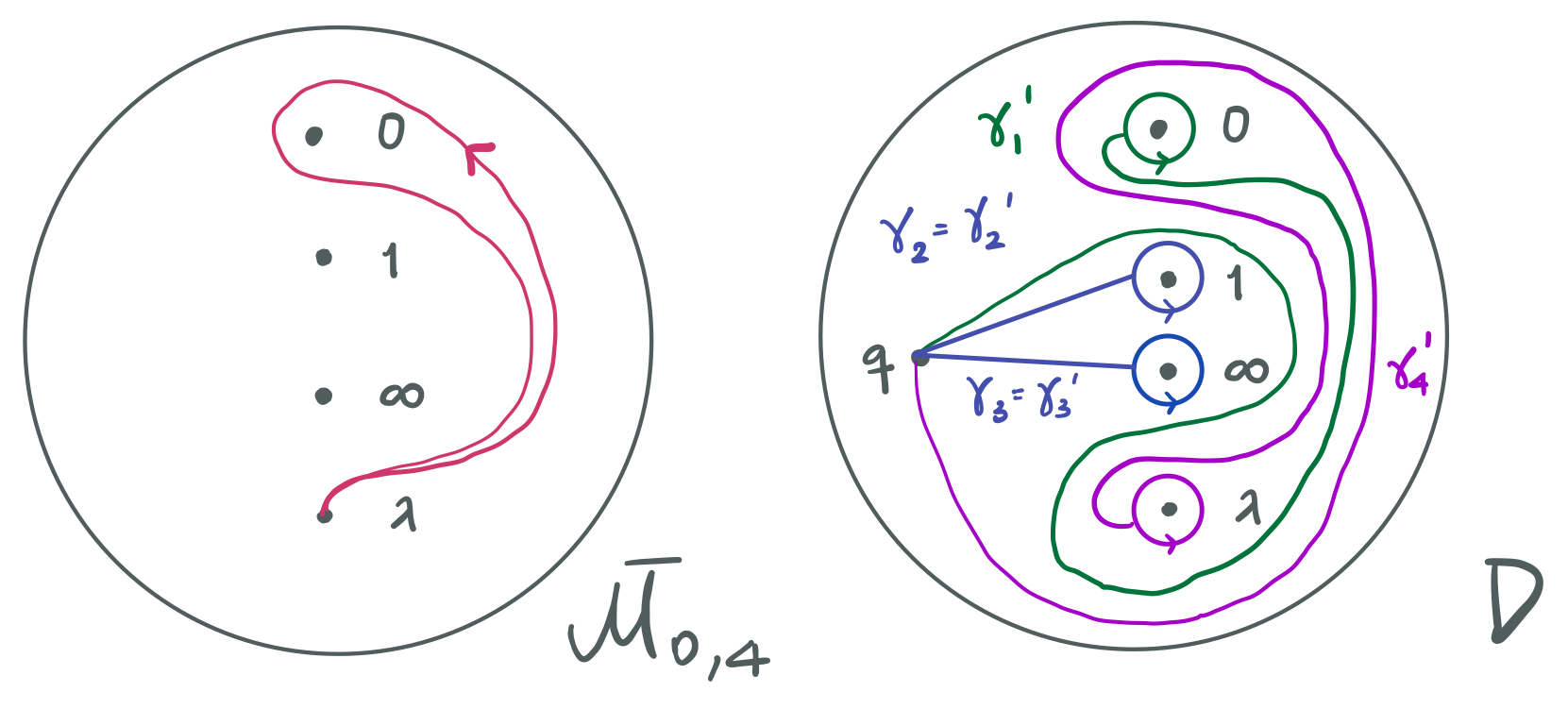}
    \caption{A loop in $\Mbar_{0,4}$ based at $\lambda$ and winding around $0$, which perturbs the generators of $\pi_1(D \setminus B, q)$ into a new generating set on the right. This is a two-dimensional picture.}
    \label{fig:loopzero}
\end{figure}

Finally, for a loop based at $\lambda $ and winding once around $0$ (see \Cref{fig:loopzero}), the resulting monodromy representation with respect to the standard generating set is:

\begin{equation}\label{formulazero}
  \begin{aligned}
    \tau_1 &= \sigma_1 \sigma_2 \sigma_3 \sigma_4 \sigma_3^{-1} \sigma_2^{-1} \sigma_1 \sigma_2 \sigma_3 \sigma_4^{-1} \sigma_3^{-1} \sigma_2^{-1} \sigma_1^{-1}\\
    \tau_2 &= \sigma_2\\
    \tau_3 &= \sigma_3\\
    \tau_4 &= \sigma_3^{-1}\sigma_2^{-1}\sigma_1 \sigma_2 \sigma_3 \sigma_4 \sigma_3^{-1} \sigma_2^{-1} \sigma_1^{-1} \sigma_2 \sigma_3.
  \end{aligned}
\end{equation}
\subsubsection{Python program for computing monodromy of target map.}\label{python}
By calculating the endpoints of lifts of loops to $\Hbar_{\underline{d},\underline{g}}(\mu_1, \mu_2, \mu_3, \mu_4)$, we can compute the monodromy of $T$.
We implemented this calculation in Python.
The code (with examples) for this program is provided at \url{https://github.com/amyqhli/1dim-hurwitz-space-monodromy-calculator}. We briefly describe the algorithm here.

Given a Hurwitz space $\Hbar_{\underline{d},\underline{g}}(\mu_1, \mu_2, \mu_3, \mu_4)$, we first set \texttt{degree} equal to $d$ and produce a list of tuples \texttt{candidates} $ = \{(\sigma_1, \sigma_2, \sigma_3, \sigma_4)\} \subseteq (S_d)^4$ where $\sigma_i$ has ramification profile $\mu_i$.
Second, we multiply $\sigma_1 \sigma_2 \sigma_3 \sigma_4$ (the order of multiplication matters) to check if each tuple of permutations multiplies to the identity using the function \texttt{CheckIdentity(degree, candidates)}.
This function outputs a list called \texttt{successes}.
We add all the possible markings to each monodromy representation in \texttt{successes}, and then filter out tuples which are simultaneous conjugates of other tuples using the function \texttt{marked\_sc\_eqclass(degree, marked\_successes)}.
We are left with a list \texttt{equivs} of simultaneous conjugacy classes of fully-marked monodromy representations.
We create a dictionary \texttt{HurwSpace} which enumerates each tuple in \texttt{equivs} as \texttt{"Sheet\textbraceleft i\textbraceright"} to keep track of the sheets in this Hurwitz space.

Next, for each tuple in this list, we apply the monodromy formulae in \Cref{formulainfty,formulaone,formulazero} and check its simultaneous conjugacy class using the function \texttt{sheet\_connections\_with\_markings(degree, HurwSpace)}.
This data corresponds to sheets in the Hurwitz space coming together over $0, 1,$ and $\infty$ in $\Mbar_{0,4}$ (we call such sheets ``connected by an edge'').
We keep track of this information in a dictionary called \texttt{combined\_edges}.
Using this dictionary, we can calculate the connected components of the Hurwitz space via \texttt{find\_connected\_components} which searches for sheets connected to each other by an edge in \texttt{combined\_edges}.
We make a dictionary \texttt{component\_groups} of the components and the sheets in each component.
For each component in \texttt{component\_groups}, the function \texttt{ram\_of\_target\_map} calculates the degree and ramification of the target map  over $0, 1, \infty $ in $\Mbar_{0,4}$.
Finally, we output the genus of the Hurwitz space using the Riemann-Hurwitz formula.
The permutations associated to the ramification of the target map are also provided.

\section{Calculations of $\Hbar_{\underline{d},\underline{g}}(\mu_1, \mu_2, \mu_3, \mu_4)$ for $d \leq 5$ }\label{calculationdata}

Here, we record the genus and degree of the target map on the components of $\Hbar_{\underline{d},\underline{g}}(\mu_1, \mu_2, \mu_3, \mu_4)$ for $d \leq 4$, calculated via the program described in \Cref{python}.
For $d \leq 3$, all the base cases have genus 0, proving \Cref{main}.
As $\Hbar^s_{d,g}$ is a finite group quotient of $\Hbar_{\underline{d}, \underline{g}}(\underline{\mu})$ when the source curve is connected and all $\mu_i = (2,1,\dots, 1)$, this proves that $H^1(\Hbar^s_{3,g}) = 0$ and recovers the fact that $H^1(\Hbar^s_{2,g}) = 0$.

In degree 4, we have three base cases with positive genus, which we list in \Cref{degree4}.
This implies that $H^1(\Hbar_{\underline{d}, \underline{g}} (\underline{\mu}))$ is nonvanishing when $d \geq 4$.
In \Cref{degree5}, we include an example of a postive genus Hurwitz space in degree 5.

In the notation for the Hurwitz spaces, exponential notation indicates repeated ramification profiles. For example, the ramification profiles $((3), (2,1), (2,1), (1,1,1))$ is denoted $((3), (2,1)^2, (1,1,1))$.

\subsection{Degree-2 admissible covers}

\begin{center}
\begin{tabular}{|m{1em}|m{15em} |m{25em} |}
 \hline
 & Space & \# components of genus 0 and degree of the target map \\ [0.5ex]
 \hline\hline
 1 & $\Hbar_{2,}((2)^4)$ &1 w/ deg 1 \\
 \hline
 2 &$\Hbar_{2,}((2)^2, (1,1)^2)$ & 1 w/ deg 2\\
 \hline
 3 & $\Hbar_{2,}((1,1)^4)$ &8 w/ deg 1 \\
 \hline
\end{tabular}
\end{center}

\subsection{Degree-3 admissible covers}

\begin{center}
\begin{tabular}{|m{1em}|m{15em} |m{25em} |}
 \hline
 & Space & \# components of genus 0 and degree of the target map \\ [0.5ex]
 \hline\hline
 1 & $\Hbar_{3,2}((3)^4)$ & 3 w/ deg 1 \\
 \hline
 2 &$\Hbar_{3,1}((3)^3, (1,1,1))$ & 2 w/ deg 1 \\
 \hline
 3 & $\Hbar_{3,1}((3)^2, (2,1)^2)$ & 1  w/ deg 2\\
 \hline
 4 & $\Hbar_{3,0}((3)^2, (1,1,1)^2)$ & 4 w/ deg 3 \\
 \hline
 5 & $\Hbar_{3,0}((3), (2,1)^2, (1,1,1))$ & 1 w/ deg 6\\
 \hline
 6 & $\Hbar_{3,0}((2,1)^4)$ & 1 w/ deg 4 \\
 \hline
 7 & $\Hbar_{(2,1),(0,0)}((2,1)^4)$ & 1 w/ deg 1 \\
 \hline
 8 & $\Hbar_{(2,1),(0,0)}((2,1)^2, (1,1,1)^2)$ & 9 w/ deg 2 \\
 \hline
 9 & $\Hbar_{(1,1,1),(0,0,0)}((1,1,1)^4)$ & 216 w/ deg 1 \\
 \hline
\end{tabular}
\end{center}

\subsection{Degree-4 admissible covers.}\label{degree4}

\begin{center}
\begin{longtable}{|m{1em}|m{17em}|m{23em} |}
 \hline
 & Space & \# components of genus 0 with degree of the target map \\ [0.5ex]
 \hline\hline
 1 & $\Hbar_{4,3}((4)^4)$ & 4 w/ deg 1, 1 w/ deg 4 \\
 \hline
 2 & $\Hbar_{4,2}((4)^2, (2,2)^2)$ & 3 w/ deg 2 \\
 \hline
 3 & $\Hbar_{4,1}((2,2)^4)$ & 3 w/ deg 2 \\
 \hline
 4 & $\Hbar_{(2,2),(0,0)}((2,2)^4)$ & 6 w/ deg 1 \\
 \hline
 5 & $\Hbar_{4,2}((4)^2, (2,2), (3,1))$ & 1 w/ deg 6\\
 \hline
 6 & $\Hbar_{4,1}((4), (2,2)^2, (2,1,1))$ & 2 w/ deg 4\\
 \hline
 7 & $\Hbar_{4,1}((4), (2,2), (3,1),(2,1,1))$ & 1 w/ deg 12  \\
 \hline
 8 &$\Hbar_{4,2}((4)^3, (2,1,1))$ & 2 w/ deg 4 \\
 \hline
 9 & $\Hbar_{4,2}((4)^2, (3,1)^2)$ & 1 w/ deg 2, 1 w/ deg 6\\
 \hline
 10 & $\Hbar_{4,1}((4)^2, (3,1), (1,1,1,1))$ & \textcolor{blue}{1 component of genus 3 mapping with degree 24} \\
 \hline
 \endfirsthead
 \hline
 11 & $\Hbar_{4,1}((4)^2, (2,1,1)^2)$ & 2 w/ deg 8, 2 w/ deg 2 \\
 \hline
 12 & $\Hbar_{4,0}((4)^2, (1,1,1,1)^2)$ & 36 w/ deg 4 \\
 \hline
 13 & $\Hbar_{4,1}((4)^2, (2,2), (1,1,1,1))$ & 6 w/ deg 2 \\
 \hline
 14 & $\Hbar_{4,1}((4), (3,1)^2, (2,1,1))$ & 2 w/ deg 8 \\
 \hline
 15 & $\Hbar_{4,0}((4), (3,1), (2,1,1), (1,1,1,1))$ & 2 w/ deg 24\\
 \hline
 16 & $\Hbar_{4,0}((4), (2,1,1)^3)$ & 2 w/ deg 16 \\
 \hline
 17 & $\Hbar_{4,0}((4), (2,2), (2,1,1), (1,1,1,1))$ & 6 w/ deg 8 \\
 \hline
 18 & $\Hbar_{4,1}((3,1)^4)$ & 3 w/ deg 2, 3 w/ deg 3 \\
 \hline
 19 & $\Hbar_{4,0}((2,2)^3, (1,1,1,1))$ & 12 w/ deg 4 \\
 \hline
 20 & $\Hbar_{4,1}((2,2)^2, (3,1)^2)$ & 2 w/ deg 6 \\
 \hline
 21 & $\Hbar_{4,1}((2,2), (3,1)^3)$ & 2 w/ deg 4\\
 \hline
 22 & $\Hbar_{(2,2),(1,0)}((2,2)^2, (2,1,1)^2)$ & 4 w/ degree 2 \\
 \hline
 23 & $\Hbar_{4,0}((2,2)^2, (2,1,1)^2)$ &  2 w/ deg 8 \\
 \hline
 24 & $\Hbar_{(2,2),(0,0)}((2,2)^2, (1,1,1,1)^2)$ & 144 w/ deg 2 \\
  \hline
 25 & $\Hbar_{4,0}((2,2), (3,1), (2,1,1)^2)$ & \textcolor{blue}{1 component of genus 1, mapping with degree 24}  \\
 \hline
 26 & $\Hbar_{4,0}((2,2), (3,1)^2, (1,1,1,1))$ & 4 w/ deg 12 \\
 \hline
 27 & $\Hbar_{(2,2),(0,0)}((2,2), (2,1,1)^2, (1,1,1,1))$ &12 w/ deg 4\\
 \hline
 28 & $\Hbar_{(3,1),(2,0)}((3,1)^4)$ & 3 w/ deg 1 \\
 \hline
 29 & $\Hbar_{4,0}((3,1)^3, (1,1,1,1))$ & \textcolor{blue}{2 components of genus 1, each mapping with degree 12} \\
 \hline
 30 & $\Hbar_{(3,1),(1,0)}((3,1)^3, (1,1,1,1))$ & 8 w/ deg 1 \\
 \hline
 31 & $\Hbar_{4,0}((3,1)^2, (2,1,1)^2)$ & 2 w/ deg 12 \\
 \hline
 32 & $\Hbar_{(3,1),(1,0)}((3,1)^2, (2,1,1)^2)$ & 4 w/ deg 2 \\
 \hline
 33 & $\Hbar_{(3,1),(0,0)}((3,1)^2, (1,1,1,1)^2)$ & 64 w/ deg 3 \\
 \hline
 34 & $\Hbar_{(3,1),(0,0)}((3,1), (2,1,1)^2,(1,1,1,1))$ & 16 w/ deg 6\\
 \hline
 35 & $\Hbar_{(3,1),(0,0)}((2,1,1)^4)$ & 16 w/ deg 4\\
 \hline
 36 & $\Hbar_{(2,2),(0,0)}((2,1,1)^4)$ & 6 w/ deg 2\\
 \hline
 37 & $\Hbar_{(2,1,1),(1,0,0)}((2,1,1)^4)$ & 8 w/ deg 1\\
 \hline
 38 & $\Hbar_{(2,1,1),(0,0,0)}((2,1,1)^2, (1,1,1,1)^2)$ & 288 w/ deg 2\\
 \hline
 39 & $\Hbar_{(1,1,1,1),(0,0,0,0)}((1,1,1,1)^4)$ & 13824 w/ deg 1\\
 \hline
 \endlastfoot
\end{longtable}
\end{center}
\vspace{-10mm}
\subsection{Degree-5 admissible covers.}\label{degree5}

\begin{example}
  The Hurwitz space $\Hbar_{5,3}(((5)^2, (4,1)^2))$ has 3 components of genus 0, 1 component of genus 1, and 1 component of genus 3.
\end{example}

\bibliographystyle{alpha}
\bibliography{bibliography.bib}

\end{document}